\documentclass[letterpaper, 12pt]{article}
\usepackage[left=2cm,top=3cm,right=2cm]{geometry}
\usepackage{amsmath,amsfonts,amsthm}
\usepackage{amssymb}
\usepackage[all]{xy}
\usepackage{fancyhdr}
\usepackage{enumerate}
\usepackage{color}
\usepackage{caption}
\usepackage{graphicx, subfig}
\theoremstyle{definition}
\usepackage{mathtools}
\usepackage{amsfonts}
\usepackage{xypic}
\usepackage{xcolor}
\usepackage{accents}
\usepackage{float} 
\usepackage{hyperref}
\hypersetup{
    colorlinks=true,
    linkcolor=magenta,
    filecolor=red,      
    urlcolor=cyan,
    citecolor=green,
}
\numberwithin{equation}{section}
\newtheorem{theorem}{Theorem}[section]

\newtheorem{corollary}[theorem]{Corollary}
\newtheorem{proposition}[theorem]{Proposition}
\newtheorem{definition}[theorem]{Definition}

\newtheorem{conjecture}[theorem]{Conjecture}


\newcommand{\measurerestr}{ \,\raisebox{-.127ex}{\reflectbox{\rotatebox[origin=br]{-90}{$\lnot$}}}\, }

\title{\textbf{Bounded Diameter Under Mean Curvature Flow}}
\author{Wenkui Du}
\date{}

\begin{document}

\maketitle

\begin{abstract}
We prove that for the mean curvature flow of closed embedded hypersurfaces, the intrinsic diameter stays uniformly bounded as the flow   approaches the first singular time, provided all singularities are of neck or conical type. In particular, assuming Ilmanen’s multiplicity one conjecture and no cylinder conjecture, we conclude that in the two-dimensional case the diameter always stays bounded. We also obtain sharp $L^{n-1}$ bound for the curvature. The key ingredients for our proof are the Lojasiewicz inequalities by Colding-Minicozzi and Chodosh-Schulze, and the solution of the mean-convex neighborhood conjecture by Choi, Haslhofer, Hershkovits and White. Our results improve the prior results by Gianniotis-Haslhofer, where diameter and curvature control has been obtained under the more restrictive assumption that the flow is globally two-convex.
\end{abstract}

\tableofcontents

\section{Introduction}
For a family of closed embedded hypersurfaces $\mathcal{M}=\{M_{t}\}_{t\in [0,T)}$ in $\mathbb{R}^{n+1}$ evolving under mean curvature flow,  the first singular time $T<\infty$ is characterized by 
\begin{equation}
    \lim_{t\nearrow T}\max_{M_{t}}|A|=\infty,
\end{equation}
where $|A|$ denotes the norm of second fundamental form.\\\\
The central topic for mean curvature flow is to capture the geometric information of singularities.  One naturally wonders if one can control the geometry of surfaces evolving under mean curvature flow near the singular time. In particular, we have the following well-known conjecture:
\begin{conjecture}[bounded diameter conjecture]\label{conj}
For the mean curvature flow of closed embedded hypersurfaces $\mathcal{M}=\{M_{t}\}_{t\in [0,T)}$, the intrinsic diameter
stays uniformly bounded as the flow approaches to the first singular time $T$, i.e.
$$
\sup_{t\in [0,T)} \text{diam} (M_{t}, d_{t})<+\infty,
$$
where $d_{t}$ is the geodesic distance on the hypersurface $M_{t}$.
\end{conjecture}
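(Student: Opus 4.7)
The plan is to reduce the global diameter bound to a local analysis at each singular point, working under the abstract's standing hypothesis that every singularity is of neck or conical type (the abstract notes that in dimension two this becomes unconditional modulo Ilmanen's multiplicity-one and no-cylinder conjectures). Fix a small scale $r_0 > 0$. For $t$ sufficiently close to the first singular time $T$, Huisken's monotonicity formula together with White's $\varepsilon$-regularity imply that the high-curvature region $\mathcal H_t := \{p \in M_t : |A|(p,t) > r_0^{-1}\}$ is contained in a finite union of parabolic balls $B_{r_0}(x_i,T)$ centered at finitely many singular points $\{x_1,\ldots,x_N\}$. On the complement $M_t \setminus \mathcal H_t$ the second fundamental form is uniformly bounded, so Shi-type estimates together with the bounded geometry of $M_0$ control its intrinsic diameter. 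It therefore suffices to bound $\mathrm{diam}(M_t \cap B_{r_0}(x_i))$ uniformly in $t$ for each $x_i$.

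For the local bound I split into the two cases allowed by the hypothesis. If $x_i$ is a neck singularity, its tangent flow is a generalized shrinking cylinder; the mean-convex neighborhood theorem of Choi-Haslhofer-Hershkovits-White provides a space-time neighborhood in which the flow is monotone and foliated by smooth level sets of the arrival time, and the Colding-Minicozzi Lojasiewicz inequality provides a definite polynomial rate of convergence of the parabolic rescaling based at $x_i$ to that cylinder. Together these imply that for $t$ close to $T$ the piece $M_t \cap B_{r_0}(x_i)$ is a small $C^1$-perturbation of a cylindrical shrinker piece at a scale comparable to $r_0$; such a piece has intrinsic diameter bounded by a fixed multiple of $r_0$, since a finite axial segment of a thin cylinder is short in both directions. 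If $x_i$ is a conical singularity, I replace Colding-Minicozzi by the Chodosh-Schulze Lojasiewicz inequality for asymptotically conical shrinkers and run the same argument; the analogous local model is a truncated asymptotically conical shrinker, whose intrinsic diameter at scale $r_0$ is again $O(r_0)$ by the scale-invariance of the model.

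Summing the $N$ local contributions and the diameter of the complement yields the desired global bound. The main obstacle, I expect, is making the local perturbation analysis quantitative in a uniform way: one must integrate the $C^1$-closeness provided by the Lojasiewicz inequality along a radial family of curves inside the singular neighborhood, matching cylindrical or conical leaves across dyadic scales $\rho_k = 2^{-k} r_0$ until the rescaled geometry becomes effectively Euclidean at scale $\sqrt{T-t}$. The polynomial decay from Colding-Minicozzi and Chodosh-Schulze is comfortably sufficient, but assembling it into a single lemma uniform in $t$ — one that simultaneously controls the outermost scale of the neck or cone and the regime where $\sqrt{T-t}$ dominates, and that glues smoothly to the bounded-curvature complement — is the genuinely technical step; this is where the mean-convex foliation (in the neck case) and the conical asymptotic structure (in the conical case) are essential. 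The two-convex result of Gianniotis-Haslhofer corresponds to the neck subcase alone, so the conical component of the argument is the main new input required to reach the stated generality.
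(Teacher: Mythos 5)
Your overall architecture (localize near the singular set, treat neck and conical singularities separately, use the mean-convex neighborhood theorem plus Lojasiewicz for the neck case and Chodosh--Schulze for the conical case) matches the paper's, but the central step is missing, and the assertion you substitute for it begs the question. The whole difficulty of the intrinsic diameter bound is to exclude fractal-like necks inside a \emph{single} ball $B_{r_0}(x_i)$ of definite size: the tangent flow at $x_i$ only gives closeness to the shrinking cylinder at the parabolic scale $\sqrt{T-t}$, and the canonical neighborhood theorem only gives closeness to a cylinder or bowl at each point's \emph{own} curvature scale. Neither gives that $M_t\cap B_{r_0}(x_i)$ is ``a small $C^1$-perturbation of a cylindrical shrinker piece at a scale comparable to $r_0$'' -- in the intermediate region between $\sqrt{T-t}$ and $r_0$ the neck's axis could a priori tilt and wander from one neck scale to the next, which is exactly how the intrinsic length could blow up while the extrinsic diameter stays bounded. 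Saying such a piece ``has intrinsic diameter bounded by a fixed multiple of $r_0$'' is therefore the statement to be proved, not a consequence of $C^1$-closeness at one scale. The paper closes this gap with two quantitative ingredients you do not supply: a \emph{backwards stability} statement for necks on the cylinder and the bowl soliton (Proposition \ref{t}), which upgrades the pointwise canonical-neighborhood information to the existence of a strong neck at every point of the tube over a time interval of \emph{uniform} length $\tau$ (independent of the neck's scale), and only then the Colding--Minicozzi Lojasiewicz inequality in the form of the small axis tilt result (Proposition \ref{st}), applied on that uniform interval, to fix the axis and conclude that intrinsic distance along the tube's central curve is $(1+O(\varepsilon_0))$ times extrinsic distance on balls of radius comparable to $\sqrt{\tau}$; a covering of $\Omega$ by controlledly many such balls then bounds the tube length. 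Your proposed ``integrate $C^1$-closeness across dyadic scales $\rho_k=2^{-k}r_0$'' gestures at this but is not carried out, and without the uniform-in-scale backwards persistence of the neck structure the dyadic matching has no mechanism to prevent the accumulated tilt from being unbounded.

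Two smaller points. First, your localization assumes the singular set at time $T$ consists of finitely many points; for neck singularities this is false in general (e.g., a rotationally symmetric thin torus shrinks to a whole circle of neck singularities), so one must instead argue, as the paper does, that the singular set is compact and cover it by finitely many balls on which the canonical neighborhood property holds (conical singularities are isolated by Chodosh--Schulze, so those are indeed finite in number). Second, in the conical case the paper needs more than the uniqueness/Lojasiewicz theorem: it must extend the graphical estimates from the self-similarly shrinking parabolic region to a full parabolic ball centered at the singularity (Proposition \ref{cor2}, via pseudolocality and Ecker--Huisken estimates), since otherwise the region of $B_\delta(x_i)$ outside the self-similar scale is not controlled; your sketch treats the conical neighborhood as if the shrinker model held on the whole ball, which again needs proof.
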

\noindent We note that while the extrinsic diameter obviously stays bounded, controlling the intrinsic diameter is much more delicate. For example, one has to exclude the existence of fractal-like necks.  \\\\
\noindent The bounded diameter conjecture is also related to the question of establishing sharp integral curvature bounds. In fact, Topping \cite{Top08} proved that 
\begin{equation}
    \text{diam}(M_{t}, d_{t})\leq C_{n}\int_{M_{t}}H^{n-1}d\mu.
\end{equation}
Recently, Gianniotis-Haslhofer \cite{GH17} proved the bounded diameter conjecture under the assumption that the flow is globally two-convex, i.e. when the sum of any two principal curvatures of each hypersurface is positive.
\begin{theorem}[{Gianniotis-Haslhofer, \cite[Thm 1.1, Thm 1.2]{GH17}}]
If $\{M_{t}\subset \mathbb{R}^{n+1}\}_{t\in[0, T)}$ is a mean curvature flow of two-convex closed embedded hypersurfaces, then 
\begin{equation}
    \text{diam}(M_{t}, d_{t})\leq C,
\end{equation}
and moreover, 
\begin{equation}
    \int_{M_{t}}|A|^{n-1}d\mu < C,
\end{equation}
where $C$ only depends on certain geometric parameters of the initial hypersurface $M_{0}$.
\end{theorem}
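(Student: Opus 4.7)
The plan is to reduce both inequalities (1.3) and (1.4) to a single $L^{n-1}$ bound on the mean curvature and then extract this bound from the canonical neighborhood theory of two-convex flows. Two-convexity implies $|A|\leq C_{n}H$ pointwise, so $\int_{M_{t}}H^{n-1}d\mu$ and $\int_{M_{t}}|A|^{n-1}d\mu$ are comparable; combined with Topping's estimate (1.2), a uniform bound on $\int_{M_{t}}H^{n-1}d\mu$ gives both (1.3) and (1.4) simultaneously. The whole argument therefore reduces to controlling this single integral as $t\nearrow T$.

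Splitting $M_{t}$ at a threshold $H_{0}$ depending only on the initial data, the low-curvature region $\{H\leq H_{0}\}$ contributes at most $H_{0}^{n-1}\operatorname{Area}(M_{0})$ to the integral, since area is non-increasing under mean curvature flow. For the high-curvature region $\{H>H_{0}\}$, I would invoke the canonical neighborhood theorem of Huisken--Sinestrari and Haslhofer--Kleiner for two-convex flows: after parabolic rescaling so that $H=1$ at a given point, the hypersurface is $\varepsilon$-close to one of the standard models, namely a round shrinking sphere, a round shrinking cylinder (an $\varepsilon$-neck), or a convex cap attached to such a neck. Covering $\{H>H_{0}\}$ by a finite collection of canonical neighborhoods and computing the integral model by model, an $\varepsilon$-neck of radius $r$ and dimensionless axial length $L$ contributes on the order of $L\cdot r^{n-1}\cdot r^{-(n-1)}=L$, while a cap contributes $O(1)$. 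The problem is thereby reduced to bounding the combinatorial complexity of this decomposition.

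The main obstacle, which is precisely the content of the Gianniotis--Haslhofer result, is to control this combinatorics uniformly as $t\nearrow T$, preventing fractal-like accumulation or merging of necks. I would attack this by exploiting the global structure of two-convex flows: mean-convex neighborhoods provide one-sided barriers along the necks, Brakke--White $\varepsilon$-regularity together with Huisken's monotonicity formula yields uniform density bounds at each space-time point, and these ingredients allow one to organize the necks and caps into a tree whose branching is controlled by the topology of $M_{0}$ and whose total axial length is controlled by the Gaussian densities at the singular points. Summing the $L\sim O(1)$ contributions over this bounded tree then produces a constant depending only on $M_{0}$. The most delicate steps would be ruling out fractal neck accumulation at an individual singular point and ensuring that distinct neck regions do not coalesce uncontrollably; both of these hinge on the mean-convex neighborhood property, which in the two-convex setting is available by Huisken--Sinestrari's convexity estimate and the subsequent work of Haslhofer--Kleiner.
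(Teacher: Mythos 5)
Your top-level reduction is reasonable (and goes in the opposite direction from the literature: Gianniotis--Haslhofer, like this paper, first prove the diameter bound and then deduce the $L^{n-1}$ curvature bound by a Vitali covering of necks, whereas you propose to bound $\int H^{n-1}$ first and invoke Topping), but the proposal has a genuine gap exactly at the step you flag as "the main obstacle". First, a computational point that exposes the issue: an $\varepsilon$-neck of radius $r$ and dimensionless length $L$ has area $\sim L r^{n}$ and $H^{n-1}\sim r^{-(n-1)}$, so it contributes $\sim L r$, i.e.\ its actual length, to $\int H^{n-1}$ --- not $O(L)$ as you wrote. Hence bounding the high-curvature contribution to $\int H^{n-1}$ is essentially the same problem as bounding the total intrinsic length of the neck regions, which is the diameter bound itself; your reduction does not make the problem easier, it merely restates it.

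Second, the mechanism you offer for that core problem does not work. Monotonicity, Brakke--White $\varepsilon$-regularity, uniform Gaussian density bounds, mean-convex barriers and the topology of $M_{0}$ were already the ingredients behind the earlier $L^{n-1-\varepsilon}$ bounds of Head and Cheeger--Haslhofer--Naber; they are not enough to reach the endpoint, because they cannot rule out a neck whose axis tilts and spirals at smaller and smaller scales, accumulating unbounded intrinsic length inside a bounded extrinsic region (the ``fractal neck''). There is also no argument behind the assertion that the ``total axial length is controlled by the Gaussian densities at the singular points''; density bounds control multiplicity and regularity scales, not lengths. The missing ingredient --- and the actual content of \cite{GH17}, reproduced in this paper as Proposition \ref{t} (backwards stability) and Proposition \ref{st} (small axis tilt via the Colding--Minicozzi Lojasiewicz inequality \cite[Thm 6.1, Lem 6.9]{CM15}) --- is that every neck point remains a neck with an essentially \emph{fixed} axis over a uniform backwards time interval $\tau$, which forces $d_{\gamma}(p_{1},p_{2})\leq(1+O(\varepsilon_{0}))|p_{1}-p_{2}|$ along the tube and thereby converts the trivial extrinsic diameter bound into the intrinsic one. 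Without the Lojasiewicz/axis-tilt input (or some substitute for it), your tree-and-density argument cannot close. A minor further caveat: the pointwise inequality $|A|\leq C_{n}H$ from two-convexity is clean only for $n\geq3$; for $n=2$ two-convexity is just mean convexity and one must instead use the Huisken--Sinestrari convexity estimates, which give $|A|\leq(1+\varepsilon)H+C_{\varepsilon}$ only up to lower-order terms.
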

\noindent Their proof uses the Lojasiewicz inequality from Colding-Minicozzi \cite{CM15} and the canonical neighborhood theorem from  Haslhofer-Kleiner \cite{HK17}. In particular, their results improved the earlier $L^{n-1-\varepsilon}$- curvature bound from Head \cite{Hea13} and Cheeger-Haslhofer-Naber \cite{CHN13}. In fact, the diameter bound and the $L^{n-1}$ curvature bound without $\varepsilon$ depend on the fine structure of singularities. Roughly speaking, the idea is that by the canonical neighborhood theorem, the high curvature regions look like necks or caps, and the Lojasiewicz inequality controls the tilting of the necks. The $L^{n-1}$ curvature  bound can fail if one removes the two-convexity assumption. For instance, it fails if $M_{0}=S^{n-2}_{r}\times S^{2}_{R}$ is a thin rotationally symmetric torus.  
\subsection{Main results}
In this paper, we generalize the diameter and curvature bounds from Gianniotis-Haslhofer \cite{GH17}. Instead of the global two-convexity assumption, we only impose an infinitesimal assumption on the structure of singularities. To describe this, let us first recall the notion of a tangent flow. For $X_{0}=(x_{0}, t_{0})$ in $\mathcal{M}$ and $\lambda>0$, we denote by $D_{\lambda}(\mathcal{M}-X_{0})$ the flow which is obtained by shifting $X_{0}$ to space-time origin and parabolically dilating by $\lambda>0$. Now, given any sequence $\lambda_{i}\rightarrow +\infty$, by Brakke's compactness theorem  \cite[Thm 7.1]{Ilm94}, one can always pass to a subsequential limit $\mathcal{M}^{\infty}$ of $D_{\lambda_{i}}(\mathcal{M}-X_{0})$. Any such limit $\mathcal{M}^{\infty}$ is called a tangent flow at $X_{0}$.  From Huisken's monontonicity formula  \cite[Thm 3.1]{Hui90}, all tangent flows are self-similarly shrinking ancient flows. \\\\
If the tangent flow is compact, one can verify the above Conjecture \ref{conj} easily from the uniqueness result of compact tangent flow of Schulze \cite{Sch14}. In the non-compact tangent flow case, one typically sees  cylindrical singularities or conical singularities. Because the $L^{n-1}$ curvature bound can fail for $S^{n-k}\times \mathbb{R}^{k}$ singularities, where $k\geq 2$, as we discussed in the end of previous section, we assume all cylindrical singularities are of neck type, i.e. $k=1$. The precise definition of neck and conical singularities is as follows:
\begin{definition}[singularities of neck or conical type]\label{neck conical} We say that a singularity of mean curvature flow at a space-time point $X_{0}=(x_{0}, t_{0})$ is of 
\begin{itemize}
    \item \emph{neck type}, if some tangent flow at $X_{0}$ is a one $\mathbb{R}$-factor cylindrical flow $\left\{S^{n-1}_{\sqrt{-2(n-1)t}}\times \mathbb{R}\right\}_{t<0}$ (up to a rotation) with multiplicity one.
    \item \emph{conical type}, if some tangent flow at $X_{0}$  is $\left\{\sqrt{-t}\Sigma\right\}_{t<0}$ with multiplicity one, where $\Sigma$ is an asymptotically conical shrinker.
\end{itemize}
\end{definition}
\noindent We recall that an asymptotically conical shrinker is a smooth hypersurface $\Sigma$ that satisfies
$$
H_{\Sigma}=\frac{x\cdot\nu_{\Sigma}}{2},
$$
and 
$$\lim_{t \nearrow 0} \sqrt{-t} \Sigma=\mathcal{C}.
$$
Here the convergence is in $C^{\infty}_{loc}(\mathbb{R}^{n+1}-\{0\})$  with multiplicity one, and $\mathcal{C}$ is a cone over a closed hypersurface $\Gamma^{n-1}\subset S^{n}\subset \mathbb{R}^{n+1}$. 
\noindent Now, we state our main result as follows:
\begin{theorem}[diameter bound]\label{main}
Let $\mathcal{M}=\{M_{t}\}_{t\in [0,T)}$  be a family of closed embedded hypersurfaces in $\mathbb{R}^{n+1}$ evolving under mean curvature flow with first singular time $T$.  If  every singularity at time $T$ is of neck type or conical type (see Definition \ref{neck conical}), then
$$
\sup_{t\in [0,T)} \text{diam} (M_{t}, d_{t})<+\infty.
$$
\end{theorem}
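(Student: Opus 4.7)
The plan is to reduce the global diameter bound to a finite sum of local diameter bounds in small balls around each space-time singular point at time $T$. First I would show that the singular set at time $T$ consists of only finitely many spatial points. Since every tangent flow is either a shrinking neck cylinder or a shrinking asymptotically conical self-shrinker, both of which have entropies bounded below by a universal positive constant, Huisken's monotonicity combined with White's local regularity theorem forces singular points to be isolated in space; together with the finiteness of the total mass, this yields a finite singular set $\{x_1, \dots, x_N\}$. Fix a small radius $r>0$. For $t$ sufficiently close to $T$, the regular region $M_t \setminus \bigcup_i B(x_i, r)$ converges smoothly to the smooth compact hypersurface $M_T \setminus \bigcup_i B(x_i, r)$, so its intrinsic diameter is uniformly bounded. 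The problem therefore reduces to bounding $\mathrm{diam}\bigl(M_t \cap B(x_i, 2r), d_t\bigr)$ uniformly in $t$ for each $i$.

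At a neck-type singularity $X_0 = (x_i, T)$, I would apply the Colding-Minicozzi Lojasiewicz inequality to obtain uniqueness of the tangent flow together with a polynomial rate of convergence of the rescaled flow to the standard shrinking cylinder. Combined with the mean-convex neighborhood theorem of Choi-Haslhofer-Hershkovits-White, which guarantees a space-time neighborhood of $X_0$ in which the flow is mean-convex, this yields a local Haslhofer-Kleiner-type canonical neighborhood description: every high-curvature point in the neighborhood lies on an $\varepsilon$-neck or an $\varepsilon$-cap, and the axes of successive necks tilt at a rate dictated by the Lojasiewicz exponent. Following the strategy of Gianniotis-Haslhofer, this controlled tilting is then summed geometrically along a chain of overlapping necks to produce a uniform bound on the intrinsic length of the neck region terminating at $x_i$.

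At a conical-type singularity, I would instead invoke the Chodosh-Schulze Lojasiewicz inequality to obtain uniqueness of the tangent flow and a polynomial rate of convergence of the rescaled flow to a fixed asymptotically conical shrinker $\Sigma$. Because $\Sigma$ is smooth and asymptotic to a cone over a compact link $\Gamma^{n-1} \subset S^n$, the intrinsic diameter of $\Sigma$ on any bounded extrinsic ball is finite. Transporting this bound through the Lojasiewicz convergence, by writing $M_t \cap B(x_i, 2r)$ as a small $C^1$ normal graph over an appropriate rescaling of $\Sigma$, yields the desired local diameter estimate. Assembling the finitely many local bounds together with the bound on the regular part gives the global diameter bound.

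The main obstacle I expect is in the neck case: converting the Lojasiewicz rate of tilting into a genuine intrinsic distance bound requires summing contributions from a sequence of parabolic annuli at geometrically decreasing scales, and one must verify that the cumulative drift of the neck axis together with the extra intrinsic length contributed at each scale remains summable. This is precisely the mechanism that could otherwise produce the fractal-like necks alluded to in the introduction, and the polynomial decay provided by Colding-Minicozzi is exactly what closes the geometric series.
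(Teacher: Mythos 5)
Your first reduction is where the proposal breaks down. You claim that the lower bound on the Gaussian density of the tangent flows, via monotonicity and local regularity, forces the time-$T$ singular points to be isolated and hence finite in number. This is false for neck-type singularities: for a thin rotationally symmetric torus in $\mathbb{R}^{3}$ the flow becomes singular on an entire round circle, and every point of that circle has a multiplicity-one cylindrical tangent flow, so every singularity is of neck type in the sense of Definition \ref{neck conical}, yet the singular set is uncountable. A definite density gap gives closedness (upper semicontinuity of density), not isolation; only the \emph{conical} singularities are isolated, and that is a theorem of Chodosh--Schulze (Theorem \ref{cor1}), not a consequence of an entropy lower bound. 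Consequently the whole reduction to finitely many balls $B(x_i,2r)$ centered at singular points, and the picture of a ``neck region terminating at $x_i$'', do not apply: an $\varepsilon$-tube at time $\bar t$ may hover over a whole curve of cylindrical singular points. The paper instead covers the compact set of cylindrical singularities by finitely many mean-convex canonical-neighborhood balls (Theorem \ref{cylindrical}, Theorem \ref{decomposition}) and estimates the diameter of $\varepsilon$-tubes contained in that mean convex region, without ever asserting finiteness of the neck singular set.

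Even granting a good local picture, the neck mechanism you describe is not the one that closes the argument. The Colding--Minicozzi Lojasiewicz inequality is applied to the rescaled flow centered at a \emph{fixed} space-time point; for an arbitrary neck point $p$ on a tube at time $\bar t$ one must first know that $\mathcal{M}$ has strong necks centered at that same $p$ and the corresponding radii for a \emph{uniform} amount of time $\tau$ backwards. This is exactly the role of backwards stability for the cylinder and the bowl soliton (Proposition \ref{t}) combined with the canonical neighborhoods, established in the paper by a continuity/contradiction argument; only then does the small-axis-tilt statement (Proposition \ref{st}) fix the axis on $[\bar t-\tau,\bar t]$, giving that intrinsic distance along the tube's central curve is comparable to extrinsic distance, and the bounded ambient domain finishes the estimate. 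There is no summation of a geometric series of tilts over shrinking scales around a single singular point, and your sketch gives no precise substitute for the backwards-in-time step. Finally, in the conical case, tangent-flow convergence only yields graphicality over $\sqrt{T-t}\,\Sigma$ in regions of extrinsic size $\rho(\tau)\sqrt{T-t}$, which shrinks as $t\nearrow T$; to obtain the graph description on a fixed ball $B(x_i,2r)$ uniformly in $t$, as your argument requires, one needs in addition the pseudolocality and Ecker--Huisken extension argument of Proposition \ref{cor2}, which your sketch omits.
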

\noindent We also have the following sharp curvature bound.
\begin{theorem}[curvature bound]\label{cb}
Under the same assumptions as in Theorem \ref{main}, we have
$$
\sup_{t\in [0,T)} \int_{M_{t}}|A|^{n-1}d\mu < \infty.
$$
\end{theorem}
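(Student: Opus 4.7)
\textbf{Proof proposal for Theorem \ref{cb}.}
The plan is to reduce the global $L^{n-1}$ bound to a local estimate in a small parabolic neighborhood of each singular point, using the structural description already established for Theorem \ref{main}. The exponent $n-1$ is precisely the scale invariant one for $|A|^{n-1}d\mu$ on both cylindrical necks and asymptotically conical shrinkers, so the argument will be sharp only because the singularity hypothesis rules out the thin-torus type examples of higher codimension cylindrical collapse.

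First, by the neck/conical hypothesis together with Brakke compactness and an energy (Huisken density) argument, the singular set at the first singular time $T$ consists of finitely many space-time points $X_{1},\ldots,X_{N}$. Fix a small $r_{k}>0$ at each $X_{k}=(x_{k},T)$ so that on $M_{t}\setminus \bigcup_{k}B_{r_{k}}(x_{k})$ for $t\in[0,T)$, the second fundamental form is uniformly bounded (otherwise a new singular point would appear). On this complement the contribution to $\int_{M_{t}}|A|^{n-1}\,d\mu$ is bounded in terms of the area, which in turn is bounded via the intrinsic diameter bound of Theorem \ref{main} combined with the extrinsic diameter bound and the monotonicity formula.

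Second, near a neck singular point I would invoke the mean-convex neighborhood theorem of Choi--Haslhofer--Hershkovits--White together with the Colding--Minicozzi Lojasiewicz inequality (the same ingredients used for Theorem \ref{main}) to show that $M_{t}\cap B_{r_{k}}(x_{k})$ is a small $C^{2}$ graph over a unique round cylinder $S^{n-1}_{r}\times I$ whose geodesic length $L$ is uniformly controlled by the intrinsic diameter of Theorem \ref{main}. On the cylinder model one computes directly
\begin{equation*}
\int_{S^{n-1}_{r}\times I}|A|^{n-1}\,d\mu = C_{n}\cdot L,
\end{equation*}
and the $C^{2}$-closeness absorbs the error, giving $\int_{M_{t}\cap B_{r_{k}}}|A|^{n-1}\,d\mu\le C(n)L$.

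Third, near a conical singular point I would use the Chodosh--Schulze uniqueness of the asymptotic cone: $M_{t}\cap B_{r_{k}}(x_{k})$ is, for $t$ close to $T$, a small perturbation of $x_{k}+\sqrt{T-t}\,\Sigma$ for a fixed asymptotically conical shrinker $\Sigma$. Scale invariance of $|A|^{n-1}d\mu$ yields
\begin{equation*}
\int_{\sqrt{T-t}\,\Sigma\cap B_{r_{k}}(0)}|A|^{n-1}\,d\mu = \int_{\Sigma\cap B_{R}(0)}|A_{\Sigma}|^{n-1}\,d\mu_{\Sigma},\qquad R=\frac{r_{k}}{\sqrt{T-t}}.
\end{equation*}
Since $\Sigma$ is asymptotically conical, $|A_{\Sigma}|\le C(1+|x|)^{-1}$ on its end and $\mathrm{vol}(\Sigma\cap B_{R})\le CR^{n}$, so the right-hand side grows at most linearly in $R$. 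Pulling back, the left-hand side is bounded by $C\cdot r_{k}$, uniformly in $t\in[0,T)$. Perturbing back to $M_{t}$ using the Chodosh--Schulze quantitative convergence again absorbs the error term.

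Summing the finitely many local contributions with the uniformly bounded low-curvature piece gives the claimed bound. The main obstacle I expect is in step three: the shrinker $\Sigma$ is noncompact, so the relevant scale $R=r_{k}/\sqrt{T-t}$ tends to infinity, and the ``small perturbation'' statement must be quantitative on increasingly large extrinsic balls of $\Sigma$. This is exactly where the Lojasiewicz-type convergence rate from Chodosh--Schulze, rather than mere qualitative uniqueness of the tangent flow, is essential to transfer the scale-invariant shrinker estimate to $M_{t}$ itself.
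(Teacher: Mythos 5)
Your overall scheme (split off a bounded-curvature region, bound the neck regions by length against the diameter bound of Theorem \ref{main}, bound the conical regions using the asymptotically conical shrinker) is the same as the paper's, and your conical estimate is essentially the paper's argument carried out in more detail. But two structural claims you rely on are genuinely wrong. First, the singular set at time $T$ need not be finite: only the conical singularities are isolated (Theorem \ref{cor1}); neck-type singularities can form a continuum, e.g.\ a thin rotationally symmetric torus in $\mathbb{R}^{3}$ shrinks to a circle every point of which is a neck singularity, and no Brakke-compactness or Gaussian-density argument forces finiteness there. The paper avoids this by using only compactness of the singular set and covering its cylindrical part by finitely many mean convex canonical neighborhoods (Theorem \ref{cylindrical}, Theorem \ref{decomposition}); your step one must be repaired in that way, after which your complement argument (bounded $|A|$, area nonincreasing) is fine.

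Second, near a neck singularity $M_{t}\cap B_{r_{k}}(x_{k})$ is in general \emph{not} a small $C^{2}$ graph over a single round cylinder $S^{n-1}_{r}\times I$ of one radius: in a (possibly degenerate) neckpinch the fixed ball $B_{r_{k}}(x_{k})$ contains, at times close to $T$, necks of widely varying radii as well as bowl-soliton caps, and the canonical neighborhood statement is only pointwise, i.e.\ after rescaling by the curvature at each point the flow is $\varepsilon$-close to a cylinder or a bowl. Your single-radius computation $\int|A|^{n-1}\,d\mu=C_{n}L$ therefore does not literally apply; the correct version (as in the paper) covers each $\varepsilon$-tube by a Vitali family of necks with centers $p_{j}$ and radii $r_{j}$, notes that each neck contributes about $C(n,\varepsilon)\,r_{j}$, bounds $\sum_{j}r_{j}$ by a multiple of $\mathrm{diam}(M_{t},d_{t})$ via Theorem \ref{main}, and treats the caps separately (their mean curvature is bounded below by $c\rho^{-1}$, so they contribute a controlled amount). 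Relatedly, $|A|^{n-1}d\mu$ is not scale invariant --- it scales like length, so your displayed identity for the cone piece is missing a factor of $\sqrt{T-t}$ --- but that very scaling is what makes your final bound $C\,r_{k}$ for the conical part (and the length/diameter bound for the necks) come out right, so the conical step survives once the bookkeeping is corrected.
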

\noindent Theorem \ref{main} and Theorem \ref{cb} improve the main results of Gianniotis-Haslhofer \cite[Thm 1.1, Thm 1.2]{GH17} by removing the global two-convexity assumption in a favor of a much milder infinitesimal assumption on the structure of singularities.\\\\
Our conclusion is most striking for $n=2$, i.e. in the classical case of $2$-dimension surfaces in $\mathbb{R}^{3}$.
In fact, we obtain the bounded diameter conjecture in full generality assuming Ilmanen’s multiplicity one conjecture and no cylinder conjecture.  Let us recall these conjectures:
\begin{conjecture}[{Ilmanen, \cite[2. multiplicity one conjecture.]{Ilm03}}]
Let $M_{0}$ be a smooth embedded compact initial surface in $\mathbb{R}^{3}$ with  mean curvature flow $M_{t}$. Then a higher multiplicity plane cannot occur as a blowup limit of $M_{t}$ at the first singular time.
\end{conjecture}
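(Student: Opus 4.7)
The plan is to argue by contradiction following a tangent-flow strategy. Suppose the conjecture fails at some singular space-time point $X_{0}$, so that a rescaled sequence $D_{\lambda_{i}}(\mathcal{M}-X_{0})$ converges in the Brakke sense to $\{\sqrt{-t}\,P\}_{t<0}$ with multiplicity $k \geq 2$, where $P \subset \mathbb{R}^{3}$ is a plane through the origin. My first step is to apply the Brakke regularity theorem together with Huisken's monotonicity to upgrade the convergence to smooth multi-sheeted graphical convergence on any fixed parabolic cylinder: for $i$ large, $D_{\lambda_{i}}(\mathcal{M}-X_{0})$ decomposes into $k$ disjoint smooth sheets $\Sigma_{i}^{1},\dots,\Sigma_{i}^{k}$, each written as a normal graph over $P$ with $\|u_{i}^{a}\|_{C^{2}} \to 0$.

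Next I would study the adjacent-sheet separation $w_{i} = u_{i}^{2} - u_{i}^{1} > 0$, which on the rescaled flow satisfies a linear parabolic equation whose principal part is the Ornstein--Uhlenbeck operator $L = \Delta - \tfrac{1}{2}x\cdot\nabla + \tfrac{1}{2}$ on $P \cong \mathbb{R}^{2}$, with lower-order coefficients that decay as the sheets flatten. After normalising $w_{i}$ by its Gaussian-weighted $L^{2}$ norm, I would extract a nontrivial limit $w_{\infty}$ that is a positive eigenfunction of $L$ on $\mathbb{R}^{2}$; the eigenvalue structure of $L$ (with Hermite polynomial eigenfunctions and eigenvalues $\tfrac{1-k}{2}$) forces $w_{\infty}$ to be a constant or a polynomial Jacobi field corresponding to a rigid motion of $P$. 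The closedness and embeddedness of the original $M_{t}$ should then rule out each such mode: a constant limit means two globally parallel sheets on a closed surface, and tilting/translating modes should either propagate the singularity along a line of neck points or produce an asymptotically conical end, neither of which is consistent with an isolated multiplicity-two planar tangent.

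To close the last step cleanly I would combine the no-cylinder hypothesis with the mean-convex neighbourhood theorem of Choi--Haslhofer--Hershkovits--White and the Chodosh--Schulze Lojasiewicz inequality for conical shrinkers: whenever the separation is forced into a translation mode, one derives a family of nearby singular points of neck type, which can then be analysed by the canonical neighbourhood machinery already used in Theorem \ref{main} to contradict the planar multiplicity-$k$ profile.

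The hard part will be the normalisation and concentration step. The plane shrinker is extremely unstable, and its Gaussian-weighted $L^{2}$ theory admits an infinite-dimensional family of polynomially growing Jacobi fields, so a naive renormalisation of $w_{i}$ can concentrate either at spatial infinity on the rescaled flow or at the origin, producing only a trivial limit. A serious proof therefore requires a new quantitative three-annulus / frequency-type estimate bounding the ratio of the weighted $L^{2}$ norm of $w_{i}$ on consecutive scales, in the spirit of the Colding--Minicozzi Lojasiewicz inequality but adapted to the subcritical plane shrinker. Supplying such an estimate -- effectively, a nondegenerate gap between sheets at all scales when the flow is compact -- is the principal obstacle, and is precisely what distinguishes the genuinely difficult planar case from the cylindrical and conical cases handled in Theorem \ref{main}.
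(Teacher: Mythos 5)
This statement is not a theorem of the paper at all: it is Ilmanen's multiplicity one conjecture, quoted verbatim from Ilmanen's problem list and used only as a \emph{hypothesis} in Corollary \ref{main2}. The paper offers no proof, so there is nothing to compare your argument against except the mathematics itself, and as a proof attempt your sketch has genuine gaps at both ends. The very first step already assumes the hard content of the problem: you cannot invoke Brakke/White local regularity (Theorem \ref{regular}) to upgrade the convergence $D_{\lambda_i}(\mathcal{M}-X_0)\to \{\sqrt{-t}\,P\}$ to a smooth decomposition into $k$ disjoint graphical sheets, because that regularity theory requires Gaussian density below $1+\varepsilon$, whereas at a multiplicity-$k$ planar tangent the density is close to $k\geq 2$. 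Obtaining any sheeting structure near a higher-multiplicity limit is essentially equivalent to the conjecture; without it the separation function $w_i=u_i^2-u_i^1$ is not even defined.

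Beyond that, you yourself flag the decisive missing ingredient: the plane is an unstable shrinker whose linearized operator $L=\Delta-\tfrac12 x\cdot\nabla+\tfrac12$ has an infinite-dimensional space of polynomially growing Jacobi fields, so the renormalized separation can concentrate at spatial infinity or degenerate, and the ``three-annulus/frequency'' estimate you would need to prevent this is exactly the open problem, not a routine adaptation of Colding--Minicozzi or Chodosh--Schulze (whose Lojasiewicz inequalities are proved for cylinders and asymptotically conical shrinkers, not for the plane). Finally, your closing step appeals to the no cylinder conjecture and to the mean-convex neighborhood theorem to exclude the translation modes; the former is itself an open conjecture (also only assumed in Corollary \ref{main2}), and the latter applies only once one already knows a nearby singularity is of neck type with a multiplicity-one cylindrical tangent, which your argument has not produced. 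So the proposal is a plausible strategy outline in the spirit of separation-function analyses, but it does not prove the statement, and no proof should be expected from this paper, which deliberately treats the statement as an assumption.
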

\begin{conjecture}[{Ilmanen, \cite[12. No cylinder conjecture.]{Ilm03}}] Let $\Sigma$ be an embedded shrinking soliton in $\mathbb{R}^{3}$, and suppose that $\Sigma$ is not a round cylinder. Then, $\Sigma$ cannot have an end asymptotic to a cylinder. 
\end{conjecture}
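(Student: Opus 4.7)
The plan is to argue by contradiction: assume $\Sigma$ is an embedded shrinker in $\mathbb{R}^3$, distinct from any round cylinder, whose end is asymptotic to a cylinder. I would analyze the end as a small perturbation of the model cylinder, use a Lojasiewicz-type inequality to extract strong decay of this perturbation, and then rule out any nontrivial profile via unique continuation and real-analyticity of shrinkers.

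After a rigid motion, I write the asymptotic end of $\Sigma$ as a graph $r=\sqrt{2}+u(z,\theta)$ over the model cylinder $C=S^1(\sqrt{2})\times\mathbb{R}$ for $z>Z_0$, with $u\to 0$ in $C^2_{\mathrm{loc}}$ as $z\to+\infty$. Plugging this ansatz into the shrinker equation $H=\langle x,\nu\rangle/2$ produces a quasilinear elliptic PDE for $u$ whose linearization at $u=0$ is the cylindrical stability operator $\mathcal{L}$, acting on functions on $C$ with a Gaussian weight in $z$. Expanding $u$ in the $\theta$-Fourier modes and, for each mode, in Hermite-type modes in $z$ relative to $\mathcal{L}$, I would then establish a Lojasiewicz-type decay estimate on the noncompact cylindrical end, in the spirit of Colding-Minicozzi \cite{CM15} and Chodosh-Schulze. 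The target bound is that $u$ decays faster than any polynomial in $z$. The Jacobi modes of $\mathcal{L}$ correspond geometrically to translations along the cylinder axis, rotations around it, and rescalings; any such component appearing in the expansion of $u$ can, after a further normalization that pins the compact core of $\Sigma$, be absorbed into a choice of ambient coordinates.

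Once super-polynomial decay of $u$ is in hand, a Carleman-type unique continuation estimate for $\mathcal{L}$ on the cylindrical end would upgrade it to $u\equiv 0$ on $\{z>Z_1\}$ for some finite $Z_1$. Since $\Sigma$ solves a real-analytic elliptic equation, it is a real-analytic embedded submanifold; agreement with $C$ on an open piece then forces, by analytic continuation along $\Sigma$, the identity $\Sigma=C$ globally, contradicting the standing hypothesis that $\Sigma$ is not a round cylinder.

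The main obstacle is the decay step. Unlike the asymptotically conical case treated by Chodosh-Schulze, here the cylinder carries a genuine continuous family of Jacobi fields coming from its noncompact $\mathbb{R}$-factor, so one cannot hope for exponential decay from a pure ODE analysis in $z$. Separating the geometric Jacobi contributions (rigid motions and rescaling) from a possibly nontrivial slowly decaying mode, without already knowing that $\Sigma$ is $C^0$-close to $C$ globally, requires a genuinely global argument coupling the end to the compact core of $\Sigma$. Embeddedness in $\mathbb{R}^3$ and topological control on the core should supply the needed rigidity, but making this quantitative on the end itself is delicate, and is presumably the reason Ilmanen stated this as a conjecture rather than a theorem.
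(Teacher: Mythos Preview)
The statement you are attempting to prove is not proved in the paper at all: it is Ilmanen's \emph{no cylinder conjecture}, stated as an open conjecture and then \emph{assumed} as a hypothesis in Corollary~\ref{main2}. The paper contains no proof, partial proof, or proof sketch of this statement; there is nothing to compare your proposal against.

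As for the content of your outline, you are correct that the decay step is the entire difficulty, and you have essentially identified why the problem is open. The linearized operator $\mathcal{L}$ on the cylinder has non-decaying Jacobi fields (e.g.\ $z$ itself, corresponding to translation of the extinction time), and the Lojasiewicz-type machinery of Colding--Minicozzi and Chodosh--Schulze does not by itself force super-polynomial decay of a general perturbation on a cylindrical end; it controls the \emph{dynamics} of a rescaled flow converging to the cylinder, which is a different setup from a single static shrinker with a cylindrical end. Your proposed route from ``faster than any polynomial'' to ``identically zero'' via Carleman estimates and analyticity is reasonable in principle, but the first step---getting any decay beyond the Jacobi modes without already knowing global closeness to the cylinder---is exactly the unresolved core of the conjecture, as you yourself note in your final paragraph. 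In short, your proposal is a plausible outline of where a proof would have to go, not a proof; and the paper makes no claim to have one.
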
 
\noindent This reduction of assumptions is based on the following facts in the case where $n=2$. First, Ilmanen \cite{Ilm95} proved that for embedded mean curvature flow, every tangent flow is smoothly embedded. Then, Wang \cite{Wan14} proved that every non-compact tangent flow must have cylindrical and conical ends. Also, a more than  one $\mathbb{R}$-factor cylindrical flow is obviously excluded, so the above two conjectures imply that for $n=2$, we can only see singularities of neck type or conical type, or singularities whose blowing-up is compact tangent flow, Hence, we obtain the general conclusion for $n=2$:
\begin{corollary}[diameter and curvature bounds for $n=2$.]\label{main2}
Let $\mathcal{M}=\{M_{t}\}_{t\in [0,T)}$  be a family of closed embedded surfaces in $\mathbb{R}^{3}$ evolving under mean curvature flow with first singular time $T$, if we assume Ilmanen’s multiplicity one and no cylinder conjecture, then we have
$$
\sup_{t\in [0,T)} \text{diam} (M_{t}, d_{t})<+\infty,
$$
and
$$
\sup_{t\in [0,T)} \int_{M_{t}}|A|d\mu < \infty.
$$
\end{corollary}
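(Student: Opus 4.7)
The plan is to reduce Corollary~\ref{main2} to Theorem~\ref{main} and Theorem~\ref{cb} together with a direct argument in the compact tangent flow case. The key input is a classification of tangent flows when $n=2$ under Ilmanen's multiplicity one and no cylinder conjectures.

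First, I would classify the tangent flows at an arbitrary singular point $X_0=(x_0,T)$. By Ilmanen's regularity theorem \cite{Ilm95}, every tangent flow at $X_0$ is smoothly embedded, and the multiplicity one conjecture then forces it to have multiplicity one. By Wang \cite{Wan14}, for such a smooth self-shrinker in $\mathbb{R}^3$ every end is either cylindrical or conical, and the only cylindrical model available in $\mathbb{R}^3$ (after discarding the multiplicity-one plane, which is precluded at an actual singular point by Brakke regularity) is the round cylinder $S^1\times\mathbb{R}$. The no cylinder conjecture then implies that any smooth shrinker with a cylindrical end is itself the round cylinder. Therefore every non-compact tangent flow is either a multiplicity-one neck or an asymptotically conical shrinker, and every singularity is of neck type, of conical type, or admits a compact tangent flow.

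Next, I would dispose of the three cases. Neck and conical singularities are covered directly by Theorem~\ref{main} and Theorem~\ref{cb}. For a singular point $X_0$ with compact tangent flow, Schulze's uniqueness theorem \cite{Sch14} asserts that the tangent flow is unique and that the rescaled flow converges smoothly to a fixed smooth compact shrinker $\Sigma$. Hence a neighborhood of $x_0$ in $M_t$ is, for $t$ close to $T$, diffeomorphic to $\Sigma$ and its intrinsic geometry is that of $\sqrt{2(T-t)}\,\Sigma$ up to higher-order corrections; by a direct rescaling its diameter is $O(\sqrt{T-t})$ and its $L^{n-1}=L^1$ curvature integral is bounded by $\sqrt{2(T-t)}\int_\Sigma|A_\Sigma|\,d\mu$, both of which tend to $0$. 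Since the singular set at time $T$ is compact in $\mathbb{R}^3$ and both the intrinsic diameter and $\int|A|\,d\mu$ are continuous on $[0,T)$, a finite covering of the singular set by disjoint parabolic neighborhoods combines the neck, conical, and compact tangent flow contributions into the uniform bounds asserted in Corollary~\ref{main2}.

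The main obstacle will be confirming that the proofs of Theorem~\ref{main} and Theorem~\ref{cb} tolerate the presence of compact tangent flow singularities alongside neck and conical ones. Their statements assume that \emph{every} singularity is of neck or conical type, but since their arguments are parabolic-local around each singular point, the hypothesis should only be needed where neck or conical blowup analysis is invoked; the compact tangent flow points are then handled by the Schulze estimate above. Verifying this locality carefully, and showing that the various parabolic neighborhoods can be taken pairwise disjoint for $t$ close to $T$, is what makes the combined proof go through.
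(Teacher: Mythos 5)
Your proposal is correct and follows essentially the same route as the paper: under Ilmanen's two conjectures, combine Ilmanen's smoothness of tangent flows and Wang's classification of ends to see that every singularity is of neck type, conical type, or has a compact tangent flow, then invoke Theorem \ref{main} and Theorem \ref{cb} for the first two cases and Schulze's uniqueness of compact tangent flows for the third. Your closing remark about checking that the arguments localize in the presence of compact tangent flow singularities is a fair point, but it is exactly the (implicit) step the paper also relies on, so there is no substantive difference in approach.
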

\subsection{Ideas and key ingredients}
The main difficulty is to control the geodesic length in regions which contain singularities and have high curvature because diameter can be uniformly controlled  in the regions which have bounded geometry. To overcome the difficulty, we need to use the infinitesimal assumption on the structure of singularities to  decompose the flow  $\mathcal{M}$ into a low curvature part and a part that can be well approximated  in terms of certain standard geometric models. Since we only see singularities of cylindrical type and conical type, we can actually decompose the flow into the following more precise three parts (see Theorem \ref{decomposition}): 
\begin{itemize} 
\item  {low curvature part,}
\item  {mean convex part,}
\item {conical part.}
\end{itemize}
For obtaining this decomposition, the following two ingredients are important: the existence of mean convex canonical neighborhoods and a precise description of neighborhoods of conical singularities. \\\\ 
 First, based on the solutions to mean convex canonical neighborhood conjecture  proved by Choi, Haslhofer, Hershkovits  and White (see \cite{CHH18} and \cite{CHHW19}),  we obtain the existence of  mean convex canonical neighborhoods (see Theorem \ref{cylindrical}). For instance, when  $n=2$, we prove this by  contradiction using the classification of ancient low entropy flows (see \cite[Thm 1.2]{CHH18}).\\\\ 
 Then, by adapting the methods from Chodosh and Schulze's work of proving uniqueness of conical tangent flow in \cite{CS19},  we obtain  a precise description of neighborhoods of conical singularities (see Proposition \ref{cor2}). More precisely, we use their uniqueness result and obtain the estimates in a certain region of space-time with conical singularity as tip. Then, we need to extend the estimates into some parabolic ball with conical singularity as center. The strategies are applying  pseudolocality and Ecker-Huisken's curvature estimate to the renormalized flow and then rescale the estimates back.   \\\\
 Next, using the previous decomposition, we reduce to estimating diameter in tubes, i.e. regions entirely covered by necks (see Proposition \ref{reduce to neck}). This is related to  \cite[Prop 3.2]{GH17} but one difference here is that we need to use  Proposition \ref{cor2} to control the length of geodesics near conical singularities.\\\\ 
 For dealing with the estimation in  tubes, we need the following two ingredients: backwards stability (Proposition \ref{t}) and  small axis tilt (Proposition \ref{st}).  Since  bowl soliton and cylindrical flow are all the possible models in mean convex canonical neighborhoods (Theorem \ref{cylindrical}), we only need to prove backwards stability for these two models. On the other hand, motivated by the Lojasiewicz inequality and uniqueness argument of Colding and Minicozzi (see \cite{CM15}), we obtain the small axis tilt.\\\\
Then, using backwards stability and  small axis tilt, we show that in the neck region, the flow is close to tube flow with small tilt during a uniform period of time $\tau$ before the singular time $T$. Thus, we obtain the uniform control of the diameters in tubes, which by the above reductions yields a global diameter bound. This completes the sketch of the proof of Theorem \ref{main}. \\\\
Finally, for proving Theorem \ref{cb},  we estimate the curvature integral in all the three parts obtained by Theorem \ref{decomposition}. We show that mean convex part is the union of controlled number of tubes and caps, and Theorem \ref{main} implies the curvature bound in the tubes. The curvature bound in low curvature part, cap part, and conical part can be easily obtained. This completes the sketch of the proof of Theorem \ref{cb}. 
\subsection{Organization of the paper}
We organized the paper as follows: 
\begin{itemize}
\item In Section \ref{pre}, we introduce some general definitions and summarize necessary preliminaries. 
\item In Section \ref{mcx}, we discuss the canonical neighborhoods of neck singularities (Theorem \ref{cylindrical}).
\item In Section \ref{ncs}, we prove Proposition \ref{cor2}, which gives precise description of  the neighborhoods of conical singularities. 
\item In Section \ref{secd}, we  give Theorem \ref{decomposition}, the decomposition of the flow into low curvature part,  mean convex part and conical part. 
\item In Section \ref{rnc}, we reduce to estimating diameter in neck region (see Proposition \ref{reduce to neck}).
\item In Section \ref{bssat}, we discuss backwards stability (see Theorem \ref{t}) and small axis tilt (see Theorem \ref{st}). 
\item In Section \ref{completion}, we apply the above ingredients and conclude our proof of Theorem \ref{main} and Theorem \ref{cb}. 
\end{itemize}
\subsection*{Acknowledgements}
The author acknowledges his supervisor Robert Haslhofer for his patient guidance and  invaluable support in bringing this paper into fruition over the last year.
\section{Preliminaries}\label{pre}
In this section, we collect some general definitions and facts that will be used throughout the paper.
\begin{definition}[Brakke flow, {\cite[Def 6.2]{Ilm94}}]\label{brakke}
An $n$-dimensional integral Brakke flow $\mathcal{M}=\{\mu_{t}\}_{t\geq 0}$ is  a one parameter family of Radon measures  on $\mathbb{R}^{n+1}$ such that
\begin{enumerate}[(i)]
\item For almost every $t\geq 0$, $\mu_{t}=\mu_{V(t)}$, where $V(t)$ is an integral dimensional varifold  in $\mathbb{R}^{n+1}$, such that for every vector field $X$,
$$
\delta V_{t}(X)=-\int H \cdot Xd\mu_{t}
$$
holds for some vector valued function $H\in L^{2}(\mu_{t})$.
\item For every nonnegative function $f\in C_{c}^{1}(\mathbb{R}^{n+1}\times[a,b])$, where $0\leq a<b<\infty$, we have
$$\int f(\cdot,b)d\mu_{b}-\int f(\cdot,a)d\mu_{a}\leq \int_{a}^{b}\int -|H|^2f+H\cdot\nabla f+\frac{\partial f}{\partial t}d\mu_{t}dt. 
$$
\item For almost every $t\geq 0$, there are a $\mathbb{Z}$-valued function $\theta$ and a $n$-dimensional rectifiable set $M_{t}$, such that
$$
\mu_{t}=\theta\mathcal{H}^{n}\measurerestr M_{t}.
$$
\end{enumerate}
\end{definition}

\begin{definition}[convergence of Brakke flow, {\cite{Ilm94}}]
A sequence of Brakke flows $\mathcal{M}^{i}$ converges to $\mathcal{M}^{\infty}$, which is denoted by $\mathcal{M}^{i}\rightarrow \mathcal{M}^{\infty}$, if the following two properties hold.
\begin{enumerate}[(i)]
\item For all $t$ and $f\in C^{1}_{c}(\mathbb{R}^{n+1})$, we have\\
$$
\mu^{i}_{t}(f)\rightarrow \mu^{\infty}_{t}(f).
$$
\item For almost every $t$, there is a subsequence depending on $t$, such that
$$
V_{\mu^{i(t)}_{t}}\rightarrow V_{\mu^{\infty}_{t}}
$$
as varifolds.
\end{enumerate}
\end{definition}
\begin{definition}[entropy, \cite{CM12}]
  Let $\mu$ be $n$ dimensional integral rectifiable Radon measure.  The entropy of $\mu$ is given by 
  $$
  \mathrm{Ent}(\mu)=\sup_{x_{0}\in \mathbb{R}^{n+1},t_{0}>0}\int\frac{1}{({4\pi t_{0})}^{\frac{n}{2}}}\exp\left(-\frac{|x-x_{0}|^2}{4t_{0}}\right)d\mu.
  $$
  The entropy of Brakke flow $\mathcal{M}=\{\mu_{s}\}_{s\in[0,T)}$ is given by
  $$
  \mathrm{Ent}(\mathcal{M})=\sup_{s\in [0,T)}\mathrm{Ent}({\mu}_{s}).
  $$
\end{definition}
\noindent For mean curvature flow, we have following Huisken's monontonicity formula: 
\begin{theorem}[Huisken's monontonicity formula, {\cite[Thm 3.1]{Hui90}}]
Let $\mathcal{M}=\{M_{t}\}_{t\in [0,T)}$ in $\mathbb{R}^{n+1}$ be a family of closed smoothly embedded  hypersurfaces  evolving under mean curvature flow, $X_{0}=(x_{0},t_{0})$ be a space-time point, and
$$\Phi_{X_{0}}(x,t)=\frac{1}{(4\pi(t_{0}-t))^{\frac{n}{2}}}\exp\left({-\frac{|x-x_{0}|^2}{4(t_{0}-t)}}\right).$$
Then,
\begin{equation}\label{mon}
\frac{d}{dt} \int\Phi_{X_{0}}(x,t)d\mu_{t}=-\int \Phi_{X_{0}}(x,t)\left|H+\frac{(x-x_{0})\cdot \vec{n}}{2(t_{0}-t)}\right|^2 d\mu_{t}.
\end{equation}
If one replaces the equality in \eqref{mon} by an inequality, then it holds for Brakke flow.
\end{theorem}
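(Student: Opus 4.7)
\noindent The plan is a direct computation: differentiate $\int \Phi_{X_{0}}\, d\mu_{t}$ using the first variation of area under MCF, then reorganize the integrand into a perfect square plus a remainder that vanishes after an integration by parts. Recall that for any smooth $f(x,t)$ and smooth MCF,
\begin{equation*}
\frac{d}{dt}\int_{M_{t}} f\, d\mu_{t} = \int_{M_{t}}\left(\frac{\partial f}{\partial t} + \vec{H}\cdot \bar{\nabla} f - |\vec{H}|^{2} f\right)d\mu_{t},
\end{equation*}
where $\bar{\nabla}$ is the ambient Euclidean gradient and $\vec{H} = H\vec{n}$. I would apply this with $f = \Phi_{X_{0}}$ and, setting $\tau := t_{0}-t$, use the explicit derivatives
\begin{equation*}
\partial_{t}\Phi_{X_{0}} = \left(\tfrac{n}{2\tau}-\tfrac{|x-x_{0}|^{2}}{4\tau^{2}}\right)\Phi_{X_{0}}, \qquad \bar{\nabla}\Phi_{X_{0}} = -\tfrac{x-x_{0}}{2\tau}\Phi_{X_{0}}.
\end{equation*}

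\noindent Substituting these into the first variation identity and using the orthogonal decomposition $|x-x_{0}|^{2} = |(x-x_{0})^{T}|^{2} + ((x-x_{0})\cdot\vec{n})^{2}$, I would complete the square: the three terms $-|\vec{H}|^{2}\Phi_{X_{0}}$, $-\vec{H}\cdot\tfrac{x-x_{0}}{2\tau}\Phi_{X_{0}}$, and $-\tfrac{((x-x_{0})\cdot\vec{n})^{2}}{4\tau^{2}}\Phi_{X_{0}}$ combine into exactly $-\Phi_{X_{0}}\bigl|H+\tfrac{(x-x_{0})\cdot\vec{n}}{2\tau}\bigr|^{2}$. What remains on the right-hand side is
\begin{equation*}
\int_{M_{t}}\Phi_{X_{0}}\left(\tfrac{n}{2\tau}+\tfrac{\vec{H}\cdot(x-x_{0})}{2\tau}-\tfrac{|(x-x_{0})^{T}|^{2}}{4\tau^{2}}\right)d\mu_{t},
\end{equation*}
so the task reduces to showing that this integral vanishes.

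\noindent To kill the remainder, I would apply the divergence theorem on the closed hypersurface $M_{t}$ to the tangential vector field $X := \tfrac{1}{2\tau}\Phi_{X_{0}}(x-x_{0})^{T}$; since $X$ is tangential and $M_{t}$ has no boundary, $\int_{M_{t}}\mathrm{div}_{M_{t}}X\,d\mu_{t}=0$. Expanding via the Leibniz rule and the standard identities $\mathrm{div}_{M_{t}}(x-x_{0})^{T} = n + \vec{H}\cdot(x-x_{0})$ (obtained by subtracting the normal part, whose tangential divergence equals $-\vec{H}\cdot(x-x_{0})$) and $\bar{\nabla}\Phi_{X_{0}}\cdot(x-x_{0})^{T} = -\tfrac{|(x-x_{0})^{T}|^{2}}{2\tau}\Phi_{X_{0}}$ reproduces precisely the integrand of the remainder, confirming that it vanishes. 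For the Brakke version, I would insert a spatial cut-off $\zeta_{R}(x)\Phi_{X_{0}}(x,t)$ into Definition \ref{brakke}(ii) and pass to $R\to\infty$, using the Gaussian decay of $\Phi_{X_{0}}$ to control the cut-off error terms; the equality is replaced by an inequality because the Brakke defect term is one-sided. The main technical step in the whole argument is this divergence identity producing the perfect square; everything else is careful but routine bookkeeping.
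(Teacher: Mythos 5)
The paper itself offers no proof of this statement; it is quoted as a preliminary, citing Huisken \cite{Hui90} for the smooth case (and Ilmanen for the Brakke version), so there is no in-paper argument to compare against. Your argument is the standard Huisken proof and it is correct: the first variation formula with $f=\Phi_{X_{0}}$, the explicit derivatives of $\Phi_{X_{0}}$, and the divergence theorem applied to the tangential field $\tfrac{1}{2\tau}\Phi_{X_{0}}(x-x_{0})^{T}$ together with $\mathrm{div}_{M_{t}}(x-x_{0})^{T}=n+\vec{H}\cdot(x-x_{0})$ do exactly close the computation. One bookkeeping slip in the prose: the three terms you list do \emph{not} by themselves form the perfect square, since $\bigl|H+\tfrac{(x-x_{0})\cdot\vec{n}}{2\tau}\bigr|^{2}$ contains the cross term $\tfrac{H\,(x-x_{0})\cdot\vec{n}}{\tau}$ while the first variation only supplies $\tfrac{H\,(x-x_{0})\cdot\vec{n}}{2\tau}$; the missing half is precisely the $+\tfrac{\vec{H}\cdot(x-x_{0})}{2\tau}$ that you (correctly) put into the remainder, so your displayed decomposition and the subsequent divergence identity are consistent and the proof stands, but that sentence should be rephrased. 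For the Brakke part, your cut-off sketch is the standard Ilmanen localization; to make it precise one should note that the conclusion there is the integrated inequality $\int\Phi_{X_{0}}\,d\mu_{t_{2}}-\int\Phi_{X_{0}}\,d\mu_{t_{1}}\leq-\int_{t_{1}}^{t_{2}}\int\Phi_{X_{0}}\bigl|H+\tfrac{(x-x_{0})^{\perp}}{2(t_{0}-t)}\bigr|^{2}d\mu_{t}\,dt$ rather than a literal $\tfrac{d}{dt}$, and that the divergence-theorem step must be replaced by the varifold first-variation identity $\int\mathrm{div}_{M}X\,d\mu=-\int X\cdot H\,d\mu$ (using $H\in L^{2}$ and $H\perp T_{x}M$ a.e.\ for integral varifolds) applied to the cut-off field; these are routine but worth stating.
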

\noindent As a consequence of Huisken's monotonicity formula, the Gaussian density can be defined as follows:
\begin{definition}[Gaussian density]
For Brakke flow $\mathcal{M}=\{\mu_{t}\}_{t\geq 0}$ and a space-time point $X_{0}=(x_{0},t_{0})$, the Gaussian density $\Theta(\mathcal{M},X_{0})$ of $\mathcal{M}$ at $X_{0}$ is given by
$$\Theta(\mathcal{M},X_{0})=\lim_{t\nearrow t_{0}}\int \Phi_{X_{0}}(x,t) d\mu_{t}.$$
\end{definition}
\noindent Now, we state Brakke's compactness theorem (formulated in terms of entropy):
\begin{theorem}[Brakke's compactness theorem, {\cite[Thm 7.1]{Ilm94}}]\label{compactness}
For a family of integral Brakke flows $\mathcal{M}^{i}$, if the entropy of $\mathcal{M}^{i}$ is uniformly bounded, then $\mathcal{M}^{i}\rightarrow\mathcal{M}^{\infty}$ (in Brakke flow's convergence) and  $\mathcal{M}^{\infty}$ is still an integral Brakke flow. 
\end{theorem}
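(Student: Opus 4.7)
The plan is to prove Brakke's compactness by combining three classical ingredients: weak-$*$ compactness for Radon measures with uniformly bounded mass on compact sets, Allard's integer rectifiability theorem applied at each time slice, and Helly's selection theorem to promote almost-everywhere convergence in $t$ to convergence at every $t$.

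First, I would trade the uniform entropy bound for uniform mass bounds using Huisken's monotonicity: for any $R>0$ and any bounded time interval $[0,T]$ one has $\sup_i \mu_t^i(B_R(x_0)) \le C(R,T)\,\mathrm{Ent}(\mathcal{M}^i)$, since a Gaussian-weighted integral at a suitably chosen backwards scale dominates the measure of a fixed ball. With these bounds in hand, the Banach--Alaoglu theorem applied at each fixed time, followed by a diagonal extraction over a countable dense set of times, an exhausting sequence of compact sets, and a countable dense family of test functions, produces a subsequential Radon measure limit $\mu^\infty_t$ at each rational time.

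Next, I would promote this to convergence at every $t\ge 0$. For any fixed nonnegative $f\in C^1_c(\mathbb{R}^{n+1})$, the Brakke inequality of Definition \ref{brakke}(ii) combined with the mass bound implies that $t\mapsto \mu_t^i(f) - Ct$ is monotone nonincreasing for a constant $C = C(f)$ independent of $i$ (absorbing the $H\cdot \nabla f$ term via Cauchy--Schwarz against the favorable $-|H|^2 f$ term). Helly's selection theorem applied to this family of monotone functions then yields pointwise convergence $\mu_t^i(f) \to \mu_t^\infty(f)$ at every $t$ along a further subsequence, giving condition (i) of the convergence definition.

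The hardest step, which I expect to be the main obstacle, is obtaining integrality of $\mathcal{M}^\infty$ and varifold convergence at a.e.\ time so that the Brakke inequality passes to the limit. For a.e.\ $t$, integrating the Brakke inequality in time and applying Fatou gives $\liminf_i \int |H^i|^2\, d\mu_t^i < \infty$ along a $t$-dependent subsequence. Together with the mass bound and the integrality of the $\mathcal{M}^i$, Allard's integral varifold compactness then produces a subsequential varifold limit $V_{\mu_t^\infty}$ that is integer rectifiable with generalized mean curvature in $L^2(\mu_t^\infty)$, and the weak lower semicontinuity of $\int |H|^2 f\, d\mu$ under varifold convergence allows one to pass the unfavorably signed term $-|H|^2 f$ to the limit. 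The delicate point is precisely this lower semicontinuity together with Allard's theorem, which is what forces the hypothesis that each $\mathcal{M}^i$ is already integral rather than merely a general Brakke flow.
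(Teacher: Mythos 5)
The paper does not prove this statement at all: it is quoted verbatim as a preliminary from Ilmanen (Thm 7.1 of the cited work), so there is no in-paper proof to compare against. Your outline is essentially the standard Ilmanen argument and is correct in structure: entropy bounds give uniform local mass bounds (indeed directly from the definition of entropy, testing with a Gaussian at scale comparable to the ball, so Huisken's monotonicity is not even needed); the semi-decreasing property of $t\mapsto\mu^i_t(f)-C(f)t$ plus a Helly/diagonal extraction gives measure convergence at every time; and at a.e.\ time the $L^2$ bound on mean curvature from the Brakke inequality, Allard's integral compactness, and lower semicontinuity of $\int|H|^2 f\,d\mu$ let you pass the Brakke inequality and integrality to the limit. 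Two small repairs are worth noting. First, the Cauchy--Schwarz absorption $H\cdot\nabla f\leq |H|^2f+\tfrac{|\nabla f|^2}{4f}$ requires $|\nabla f|^2/f$ to be bounded, which fails for general nonnegative $f\in C^1_c$; one should run the monotonicity/Helly step on a countable family of test functions of the form $\phi^2$ (or nonnegative $C^2_c$ functions) and then extend to all of $C^1_c$ using the uniform mass bounds. Second, in the a.e.-time step you must also identify the weight measure of the Allard limit varifold with $\mu^\infty_t$ (this follows since the measures already converge) and pass the first-order term $\int H\cdot\nabla f\,d\mu_t$ to the limit via the weak $L^2$ convergence of the mean curvature vectors, not only the quadratic term; and the conclusion is of course subsequential convergence, which is the actual content of the theorem as used in the paper.
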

\noindent The next two definitions are useful in our proof of diameter bound of mean curvature flow.
\begin{definition}[$\varepsilon$-close]\label{epsilon}
  Let $\mathcal{M}$ be a smooth mean curvature flow and  $X_{0}$ be a space-time point. $X\in P(X_{0},r)$ is $\varepsilon$-close to some mean curvature flow $\{\Sigma_{t}\}$ at $X_{0}$, if
  $D_{|H(X)|}(\mathcal{M}-X_{0})(t)$ is the $C^{[\varepsilon^{-1}]}$ graph of function $u(p,t)$ over $\Sigma_{t}\cap B(0,\varepsilon^{-1})$, and $$\|u(p,t)\|_{C^{[\varepsilon^{-1}]}}\leq\varepsilon$$
  holds for all $p\in \Sigma_{t}\cap B(0,\varepsilon^{-1})$ and for all $t\in (-\varepsilon^{-2},0)$. 
\end{definition}
\begin{definition}[regularity scale]\label{mr}
We define the regularity scale $R(X)$ of mean curvature flow $\mathcal{M}$ at $X=(x,t)$ as the supremum of $0\leq r\leq 1$, such that $M_{t'}\cap B(x,r)$ is a smooth graph for all $t'\in (t-r^{2},t+r^{2})$ and such that for all $Y=(y,s)$, where  $y\in B(x,r)$ and $s\in (t-r^{2}, t+r^2)$, we have
\begin{equation}
    |A(Y)|\leq\frac{1}{r}.
\end{equation}
\end{definition}
\noindent The important result related to regularity scale is White's local regularity theorem:
\begin{theorem} [{local regularity theorem}, \cite{white05}]\label{regular}
There are universal constants $\varepsilon(n)>0$ and $C(n)<\infty$ with the following significance: Let $\mathcal{M}$ be a smooth proper mean curvature flow in $U\times (t_{1},t_{0})$, $x_{0}\in U$. If the Gaussian density $\Theta(\mathcal{M},X_{0})<1+\varepsilon$, then there is a $\rho>0$, such that
$$
|A(x,t)|^2\leq \frac{C}{\rho^2}
$$
holds for every $x\in M_{t}\cap B_{\rho}(x_{0})$ and every $t\in (t_{0}-\rho^2, t_{0})$.
\end{theorem}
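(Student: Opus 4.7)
The plan is to argue by contradiction, combining Huisken's monotonicity formula with Brakke's compactness theorem and a point-picking rescaling. Suppose the conclusion fails. Then one can find a sequence of smooth mean curvature flows $\mathcal{M}^{i}$ and space-time points $X_{0}^{i}=(x_{0}^{i},t_{0}^{i})$ with $\Theta(\mathcal{M}^{i},X_{0}^{i})<1+\varepsilon_{i}$ and $\varepsilon_{i}\to 0$, yet no choice of constants makes the desired curvature bound $|A|^{2}\leq C/\rho^{2}$ hold on any parabolic ball $P(X_{0}^{i},\rho)$. Using a Hamilton-type point-selection lemma applied inside a fixed backward parabolic neighborhood of $X_{0}^{i}$, I would pass to new space-time points $\tilde X^{i}$ where the curvature is very large and dominates a definite parabolic cylinder around $\tilde X^{i}$, then parabolically rescale so that $|A|(\tilde X^{i})=1$ and translate $\tilde X^{i}$ to the space-time origin. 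Call the resulting flows $\widetilde{\mathcal{M}}^{i}$.

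The second step is to extract a Brakke limit and identify it. Since the density hypothesis and Huisken's monotonicity ensure that the entropies of $\widetilde{\mathcal{M}}^{i}$ stay uniformly bounded, Brakke's compactness theorem (Theorem \ref{compactness}) yields a subsequential limit $\widetilde{\mathcal{M}}^{\infty}$ that is again an integral Brakke flow. Upper semicontinuity of Gaussian density together with the rescaling-invariance of $\Theta$ give $\Theta(\widetilde{\mathcal{M}}^{\infty},X)\leq 1$ at every reachable space-time point $X$, while Huisken's monotonicity forces $\Theta\geq 1$ at any point in the support of an integral Brakke flow. Hence $\Theta\equiv 1$ on the support, and the equality case of \eqref{mon} forces the right-hand integrand to vanish, so $\widetilde{\mathcal{M}}^{\infty}$ is self-similar about every space-time point. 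The only integral Brakke flow with this property is a static multiplicity-one hyperplane.

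The last step is to derive the contradiction by upgrading Brakke convergence to smooth convergence. A static plane is smooth with $|A|\equiv 0$, so provided we can promote the weak convergence $\widetilde{\mathcal{M}}^{i}\to\widetilde{\mathcal{M}}^{\infty}$ to smooth convergence on a fixed parabolic neighborhood of the space-time origin, we obtain $|A_{\widetilde{\mathcal{M}}^{i}}|(0)\to 0$, which directly contradicts $|A_{\widetilde{\mathcal{M}}^{i}}|(0)=1$ fixed by construction. The main obstacle is precisely this upgrade: one needs an $\varepsilon$-regularity statement that translates a small excess of Gaussian density above $1$ into uniform $C^{k}$ bounds, and since such an $\varepsilon$-regularity is essentially what we are trying to prove, circularity has to be avoided. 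The standard way around this is to invoke Brakke's partial regularity theorem, which produces a dense set of regular times and points in the limit; combined with the fact that a static multiplicity-one plane is smooth, this is enough to force local smooth convergence in a backward parabolic cylinder about the origin and close the contradiction argument.
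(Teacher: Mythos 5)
The paper does not actually prove Theorem \ref{regular}; it is quoted from White \cite{white05}, so your attempt must be measured against the standard proof. Your skeleton is indeed the standard one: argue by contradiction, select points by a Hamilton-type point-picking lemma, rescale so that $|A|=1$ at the selected point, and use Huisken's monotonicity (density $\leq 1$ everywhere in the limit, rigidity in the equality case) to identify the blow-up limit as a static multiplicity-one plane, contradicting $|A|(0)=1$. Up to the identification of the limit, this is essentially White's argument, modulo the standard (and necessary) step of propagating the hypothesis $\Theta(\mathcal{M}^{i},X_{0}^{i})<1+\varepsilon_{i}$ to density-ratio bounds at the selected points and at all relevant scales, which you only gesture at.

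The genuine gap is in your final step. You correctly sense the circularity in upgrading Brakke convergence to smooth convergence, but the proposed fix --- invoking Brakke's partial regularity theorem --- does not close it. That theorem concerns the a.e.\ regularity of a single (limit) Brakke flow; knowing that the limit is smooth (here even planar) gives no uniform curvature bounds along the approximating flows $\widetilde{\mathcal{M}}^{i}$, since under measure-theoretic convergence high-curvature regions of small area can simply vanish in the limit, and excluding exactly this is the content of the theorem you are proving. Indeed, the statement ``Brakke convergence to a smooth multiplicity-one limit implies smooth convergence'' is itself a corollary of the local regularity theorem, as the paper remarks immediately after Theorem \ref{regular}, so it cannot be used here, and replacing White's theorem by Brakke's (harder) regularity theorem defeats the purpose. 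The repair is already contained in your first step and makes the detour through Theorem \ref{compactness} unnecessary: the point selection gives, after rescaling, a bound such as $|A|\leq 2$ on parabolic balls whose radii tend to infinity; bounded curvature lets you write the flows locally as graphs with controlled gradient over a fixed scale, and interior estimates of Ecker--Huisken type (or parabolic Schauder bootstrapping) then bound all derivatives of $A$, so a subsequence converges smoothly by Arzel\`a--Ascoli. With smooth convergence in hand, the monotonicity-rigidity identification of the limit as a static plane contradicts $|A|(0)=1$, which is White's actual argument; as written, however, your proof is incomplete at precisely the decisive step.
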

\noindent As a consequence of local regularity theorem, if a sequence of Brakke flows converges in the sense of Brakke flows and the limit is smooth, then the convergence is smooth.

\section{Mean convex canonical neighborhoods}\label{mcx}
In this section, we discuss the structure of the flow near neck singularities. By the solution of mean convex neighborhood conjecture in \cite[Thm 1.6]{CHH18} and \cite[Thm1.15]{CHHW19}, every neck singularity has a space-time neighborhood where the flow moves in one direction. More precisely, we have the following result.
\begin{theorem}[{mean convex canonical neighborhoods, c.f.  \cite{CHH18,CHHW19}}]\label{cylindrical}
  Let  $ \mathcal{M}=\{M_{t}\}_{t\in [0,T)}$ be a smooth mean curvature flow with first singular time $T$. Assume some tangent flow at $X_{0}=(x_{0},T)$ is a multiplicity one cylindrical flow with one $\mathbb{R}$-factor. Then, for every $\varepsilon>0$, there is a $\delta=\delta(X_{0},\varepsilon)>0$, such that any $X'\in B(x_{0},\delta)\times(T-\delta^{2},T)$ is $\varepsilon$-close (see Definition \ref{epsilon}) to either a one $\mathbb{R}$-factor cylinder shrinker or a bowl soliton. In particular, $\mathcal{M}$ is mean convex in the above neighborhood.
\end{theorem}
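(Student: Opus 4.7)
The plan is to argue by contradiction, combining Brakke compactness with the classification results behind the mean convex neighborhood conjecture of Choi--Haslhofer--Hershkovits--White. Suppose the conclusion fails: fix $\varepsilon_0 > 0$ and a sequence $X_i = (x_i, t_i) \in \mathcal{M}$ with $X_i \to X_0$ such that $X_i$ is not $\varepsilon_0$-close to any shrinking one $\mathbb{R}$-factor cylinder nor to any bowl soliton. After passing to a subsequence one may arrange $r_i := |H(X_i)|^{-1} \to 0$, since if $|H(X_i)|$ stayed bounded then White's local regularity (Theorem \ref{regular}) together with the fact that $X_0$ is a genuine singularity with cylindrical tangent flow would already lead to a contradiction.

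Form the parabolically rescaled flows $\mathcal{M}_i := D_{1/r_i}(\mathcal{M} - X_i)$. By construction $|H|(0,0) = 1$, and since entropy is invariant under parabolic rescaling, $\mathrm{Ent}(\mathcal{M}_i) = \mathrm{Ent}(\mathcal{M})$ is uniformly bounded. Brakke's compactness theorem (Theorem \ref{compactness}) then yields a subsequential limit $\mathcal{M}_\infty$, which is an ancient integral Brakke flow. For any space-time point $Y$ of $\mathcal{M}_\infty$, Huisken's monotonicity formula combined with the identity $\Theta(\mathcal{M}_i, Y) = \Theta(\mathcal{M}, r_i Y \star X_i)$ and $r_i \to 0$ gives
$$
\Theta(\mathcal{M}_\infty, Y) \;\leq\; \Theta(\mathcal{M}, X_0) \;=\; \Theta_{\mathrm{cyl}},
$$
so $\mathcal{M}_\infty$ is an ancient low entropy flow.

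Now I invoke the classification. In the case $n = 2$, the classification of ancient low entropy flows in \cite[Thm 1.2]{CHH18} forces $\mathcal{M}_\infty$ to be a static multiplicity-one plane, a shrinking round cylinder with one $\mathbb{R}$-factor, or a translating bowl soliton; the plane is excluded because $|H|(0,0) = 1 \neq 0$. For general $n$, the analogous trichotomy under the one $\mathbb{R}$-factor cylindrical tangent flow hypothesis follows from the mean convex neighborhood theorem of \cite{CHHW19}. In either case $\mathcal{M}_\infty$ is smooth, so Theorem \ref{regular} upgrades the Brakke convergence to smooth convergence on compact parabolic balls. This forces $X_i$ to be $\varepsilon_0$-close to the corresponding model for large $i$, contradicting the choice of $X_i$.

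The mean convexity conclusion then comes for free: both the one $\mathbb{R}$-factor shrinking cylinder and the bowl soliton satisfy $H > 0$ everywhere, and $\varepsilon$-closeness in the $C^{[\varepsilon^{-1}]}$-sense preserves the strict sign of $H$ once $\varepsilon$ is sufficiently small. The main obstacle is really packaged inside the cited classification and mean convex neighborhood results of \cite{CHH18} and \cite{CHHW19}; once those are granted, everything else is the standard blow-up/entropy-control/compactness reduction, with the only mild subtlety being the justification that one can arrange $r_i \to 0$ along the chosen subsequence.
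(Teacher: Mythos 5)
Your overall strategy (blow-up at the bad points, entropy control via Huisken monotonicity, Brakke compactness, classification of ancient low entropy flows, local regularity to upgrade convergence) is the same as the paper's, but the execution has three concrete gaps. First, your choice of blow-up scale $r_i=|H(X_i)|^{-1}$ is not justified: bounded $|H(X_i)|$ along a sequence $X_i\to X_0$ does not contradict White's local regularity theorem, since that theorem needs a Gaussian density bound (or curvature bounds on a whole parabolic neighborhood), not merely bounded mean curvature at the points themselves; so "one may arrange $r_i\to 0$" is an unproved claim. Worse, even granting $|H(X_i)|\to\infty$, rescaling by $H(X_i)$ gives no a priori curvature bound near the space-time origin of the rescaled flows (a priori $|A|(X_i)/|H(X_i)|$ could blow up), so you only get Brakke convergence there; the assertion that the limit satisfies $|H|(0,0)=1$, which is what you use to exclude the static plane, presupposes smooth convergence at the origin that you have not established --- this is precisely why the paper normalizes by the regularity scale $R(X_i)$ (which tends to $0$ automatically since $R$ is $1$-Lipschitz and vanishes at the singular point), obtains smooth convergence on a definite parabolic ball, rules out the plane by the normalization, and only afterwards compares $R$ with $H^{-1}$ on the cylinder/bowl to recover $\varepsilon$-closeness in the sense of Definition \ref{epsilon}.

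Second, your classification step for $n=2$ is misquoted: \cite[Thm 1.2]{CHH18} gives five cases, including the round shrinking sphere and the ancient oval, both of which have entropy below that of $S^{1}\times\mathbb{R}$ and so cannot be discarded silently. The paper rules these out by observing that if the limit were a sphere or an oval, then $M_t$ would become convex shortly before $T$ and, by Huisken's theorem, would shrink to a round point, contradicting the assumed cylindrical tangent flow at $X_0$; some argument of this kind is needed in your write-up. (Your reduction for $n\geq 3$ to \cite{CHHW19} and your mean convexity remark at the end are fine and match the paper.)
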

\noindent We recall that the bowl soliton is the unique (up to rigid motion and scaling) translating solution of the mean curvature flow that is rotationally symmetric and strictly convex, see \cite{AW94, CSS07, Has15}.\\\\ For $n\geq 3$, Theorem \ref{cylindrical} has been obtained in \cite[Cor 1.18]{CHHW19} as a consequence of classification of ancient asymptotically cylindrical flows. In a similar vein, we shall see that the statement for $n=2$ ultimately follows from the classification of ancient low entropy flows in \cite[Thm 1.2]{CHH18}. Since neither the statement about canonical neighborhoods nor its proof appeared in \cite{CHH18}, let us give a detailed proof here. To this end, we first recall  the  classification of ancient  low entropy  flows.
\begin{definition}[{ancient low entropy flow, \cite[Def 1.1]{CHH18}}] 
 An ancient low entropy  flow is an ancient,  unit regular, cyclic integral Brakke flow $\mathcal{M}$ in $\mathbb{R}^3$  with $\mathrm{Ent}(\mathcal{M})\leq \mathrm{Ent}(S^{1}\times \mathbb{R})$.
\end{definition}
\noindent Here, a Brakke flow is called ancient if it is defined on some interval starting from $-\infty$, and unit regular \cite{white05} if every space-time point with density $1$ is a regular point.
Being cyclic means, loosely speaking, that the flow has an inside and an outside, see \cite{white09} for the precise definition.
\begin{theorem}[{classification of ancient low entropy flow, \cite[Thm 1.2]{CHH18}}]\label{classification}
Every  ancient low entropy  flow $\mathcal{M}$ in $\mathbb{R}^3$ is one of the following: (i) static plane,  (ii) round shrinking sphere, (iii) ancient oval,   (iv) one $\mathbb{R}$-factor cylinder, (v) bowl soliton. 
\end{theorem}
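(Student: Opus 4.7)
The plan is to follow the strategy of Choi--Haslhofer--Hershkovits and split according to the tangent flow at $-\infty$. Since $\mathcal{M}$ is ancient with $\mathrm{Ent}(\mathcal{M})\leq \mathrm{Ent}(S^{1}\times\mathbb{R})$, Brakke compactness (Theorem \ref{compactness}) applied to the parabolic rescalings $D_{\lambda_{i}}\mathcal{M}$ with $\lambda_{i}\to 0$ produces a subsequential limit $\mathcal{M}^{-\infty}$, and Huisken's monotonicity forces $\mathcal{M}^{-\infty}$ to be a self-similarly shrinking Brakke flow. By the Colding--Minicozzi entropy lower bounds for $2$-dimensional shrinkers, and since a higher-multiplicity plane would already exceed $\mathrm{Ent}(S^{1}\times\mathbb{R})$, the only possible $\mathcal{M}^{-\infty}$ are a multiplicity-one plane, a round shrinking sphere, or a round cylinder.

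If $\mathcal{M}^{-\infty}$ is a multiplicity-one plane, then the Gaussian density of $\mathcal{M}$ is identically $1$, so unit regularity together with Theorem \ref{regular} upgrades $\mathcal{M}$ to a smooth flow; a standard maximum principle argument then gives case (i). If $\mathcal{M}^{-\infty}$ is a round shrinking sphere, then $\mathcal{M}$ is compact on some backward interval, and the cyclic, unit-regular, low-entropy hypotheses give convexity and non-collapsedness; applying the classification of compact convex non-collapsed ancient flows of Angenent--Daskalopoulos--Sesum recovers cases (ii) and (iii).

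The main content is the case $\mathcal{M}^{-\infty}=S^{1}\times\mathbb{R}$. I would first upgrade the subsequential convergence to genuine uniqueness of the blow-down using the Colding--Minicozzi Lojasiewicz inequality for cylinders. Next I would show that $\mathcal{M}$ is smooth, convex, and non-collapsed globally, combining the mean-convex canonical neighborhood picture near cylindrical singularities with the entropy assumption to exclude extra sheets. A fine neck analysis along the axis, based on the spectral decomposition of the linearized shrinker operator on $S^{1}\times\mathbb{R}$ (zero modes corresponding to translations, the unstable mode to a ``cap'' direction), separates two regimes: either the unstable mode vanishes and the flow is translation-invariant along the cylinder axis, producing case (iv), or a unique tip forms at one end and the flow recentered at the tip is asymptotic to a bowl soliton at $-\infty$.

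The hardest step is to close this last case: from asymptotic closeness to a bowl one must deduce that $\mathcal{M}$ is exactly the bowl soliton (case (v)). This requires promoting the approximate rotational symmetry at spatial infinity into exact rotational symmetry of the whole flow, which I would carry out via a Neck Improvement / moving-plane iteration in the spirit of Brendle--Choi. Once rotational symmetry is established, uniqueness of the bowl among rotationally symmetric, convex, non-collapsed ancient translators in $\mathbb{R}^{3}$ closes the classification.
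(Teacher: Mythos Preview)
The paper does not prove Theorem~\ref{classification}; it is quoted verbatim from \cite[Thm 1.2]{CHH18} and used as a black box in the proof of Theorem~\ref{cylindrical}. There is therefore nothing in the paper to compare your proposal against. Your outline is a reasonable high-level summary of the strategy actually carried out in \cite{CHH18}, but be aware that it is only a sketch: the step you label ``hardest'' (promoting asymptotic rotational symmetry to exact symmetry and identifying the flow with the bowl) occupies the bulk of that paper, and several of your intermediate claims (global smoothness, convexity, non-collapsedness in the cylindrical case; the spectral dichotomy in the fine neck analysis) each require substantial independent arguments that are not consequences of the tools collected in the present paper. If your goal is simply to cite the result, as the paper does, then no proof is needed here; if you intend to supply a self-contained argument, what you have written is a roadmap rather than a proof.
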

\noindent Here, we recall that an ancient oval is ancient noncollapsed mean curvature flow of embedded 2-spheres that is not selfsimilar, see  \cite{white03,HH16} for existence and \cite{ADS18} for uniqueness.\\\\
By the discussion above, our task is to establish the existence of canonical neighborhoods in the case $n=2$. 
\begin{proof}[{Proof of Theorem \ref{cylindrical}}]
Suppose towards a contradiction, there exists $\varepsilon>0$, such that for all $\delta_{i}=\frac{1}{i}$,  we can find  $X_{i}=(x_{i},t_{i})$ in  $B(x_{0},\delta)\times(T-\delta^{2})$ that is neither $\varepsilon$-close to a one $\mathbb{R}$-factor cylinder nor to a bowl soliton.\\\\
Let $r_{i}=R(X_{i})$ be the regularity scale of $X_{i}$ (see Definition \ref{mr}). Because $X_{0}$ is singular, we have that $r_{i}$ converges to $0$. Let $\mathcal{M}^{i}=\mathcal{D}_{1/r_{i}}(\mathcal{M}-{X_{i}})$ be the flow which is obtained from $\mathcal{M}$ by shifting $X_{i}$ to the space-time origin and parabolically rescaling by $1/r_{i}$.  By the uniform boundedness of the entropy of $\mathcal{M}^{i}$ and  by Brakke's compactness theorem in \cite[Thm 7.1]{Ilm94}, we can pass to a subsequential limit $\mathcal{M}^{\infty}$, which is an integral Brakke flow.\\\\
\textbf{Claim.} $\mathcal{M}^{\infty}$ is an ancient low entropy flow.
\begin{proof}[{Proof of claim}]
By the above analysis and by \cite{white05} \cite[Thm 4.2]{white09}, we know that $\mathcal{M}^{\infty}$ is an integral, unit regular and cyclic Brakke flow. \\\\
By the definition of entropy of $\mathcal{M}^{\infty}$, and since the rescaled flows $\mathcal{M}^{i}$ converges to $\mathcal{M}^{\infty}$, for all $\eta>0$, we can find $(x_{i}', t_{i}')\rightarrow (x_{0},T)$ and $\rho_{i}\rightarrow 0$, such that
\begin{equation*}\label{eq1}
\mathrm{Ent}(\mathcal{M}^{\infty})-\eta<\int_{M_{t_{i}'-\rho^{2}_{i}}}\frac{1}{4\pi \rho^{2}_{i}}\mathrm{exp}\left(-\frac{|x-x_{i}'|^2}{4{\rho^{2}_{i}}}\right).
\end{equation*}
On the other hand, because $X_{0}$ is cylindrical singularity, 
using the upper-semicontinuity of Gaussian density and Huisken's monontoncity formula \cite[Thm 3.1]{Hui90}, we know there is some $\rho>0$, such that
\begin{equation*}\label{eq2}
\int_{M_{t-\rho^2}}\frac{1}{4\pi \rho^2}\mathrm{exp}\left(-\frac{|x-p|^2}{4r^2}\right)<\mathrm{Ent}(\mathbb{S}^{1}\times \mathbb{R})+\eta
\end{equation*}
holds for $|p-x_{0}|<\rho$ and $|t-T|<\rho^2$.\\\\
 By Huisken's monotonicity formula and the arbitrariness of $\eta>0$, we obtain
$$
 \mathrm{Ent}(\mathcal{M}^{\infty})\leq \mathrm{Ent}(\mathbb{S}^{1}\times \mathbb{R}).
$$
This implies that $\mathcal{M}^{\infty}$ is an ancient low entropy  flow, and thus proves the claim. 
\end{proof}
\noindent Hence,  $\mathcal{M}^{\infty}$ is from one of the five cases in  the classification of Theorem \ref{classification}. We will show that all cases yield a contradiction.\\\\
(i) If $\mathcal{M}^{\infty}$ is a static plane, by the local regularity theorem, we obtain that $r_{i}$ does not converge to $0$, which is a contradiction.\\\\
(ii) and (iii). If $\mathcal{M}^{\infty}$ is a round shrinking sphere or an ancient oval, by the local regularity theorem,  for $i$ large enough, $\mathcal{M}^{i}$ can be written as a graph of $C^{[1/\varepsilon]}$ function over $\mathcal{M}^{\infty}$ with $C^{[1/\varepsilon]}$ norm less than $\varepsilon$. This implies that for any given interval $[a, b]\subset (-\infty, 0)$, $\mathcal{M}^{i}$ is convex in $[a, b]$ for $i$ large enough if we choose $\varepsilon$ small.  Hence, ${M_{t}}$ becomes convex after some time $t$ close to $T$,  so Huisken's convergence theorem \cite{Hui84} implies that $\mathcal{M}$ becomes extinct as a round point. This  contradicts the assumption that $\mathcal{M}$ has cylindrical singularity at $X_{0}$.\\\\
(iv) and (v). Suppose $\mathcal{M}^{\infty}$ is a one $\mathbb{R}$-factor cylinder or a bowl soliton. First note that on the cylinder or bowl soliton, the regularity scale and the inverse mean curvature scale are comparable. Namely, there is a constant $C<+\infty$, such that,
\begin{equation}\label{RH}
C^{-1}H^{-1}(X)\leq R(X)\leq CH^{-1}(X)
\end{equation}
holds for all $X\in \mathcal{M}^{\infty}$. Recall that $\mathcal{M}^{i}=\mathcal{D}_{1/r_{i}}(\mathcal{M}-{X_{i}})$ was defined by rescaling by $1/r_{i}$, where $r_{i}=R(X_{i})$. Let  $\mathcal{\widetilde{M}}^{i}={D}_{H_{i}}(\mathcal{M}-X_{i})$, where we now rescale the flow by $H_{i}=H(X_{i})$. By the local regularity theorem, $\mathcal{M}^{i}$ converges to $\mathcal{M}^{\infty}$ smoothly. By this smooth convergence and the inequalities in \eqref{RH}, we get that
\begin{equation}\label{rhi}
    \frac{1}{2}C^{-1}H_{i}^{-1}\leq r_{i}\leq 2CH_{i}^{-1}
\end{equation}
holds for $i$ large enough. Therefore, we know that the two rescalings for $\mathcal{M}^{i}$ and $\mathcal{\widetilde{M}}^{i}$ only differ by a controlled factor. This implies that $\mathcal{\widetilde{M}}^{i}$ also converges to a cylinder or a bowl soliton smoothly. 
Correspondingly, we know that  $\mathcal{\widetilde{M}}^{i}$ is $\varepsilon$-close to a one $\mathbb{R}$- factor cylindrical flow or a bowl soliton for $i$ large enough. This contradicts our assumption that $X_{i}$ is neither $\varepsilon$-close to a one $\mathbb{R}$-factor cylinder nor to a bowl soliton. \\\\
By the above contradictions, we have completed the proof of the theorem.
\end{proof}
\section{Neighborhoods of conical singularities}\label{ncs}
In this section, we discuss the structure of the flow near conical singularities. In \cite{CS19}, Chodosh-Schulze proved that asymptotically conical tangent flows are unique and conical singularities are isolated.
\begin{theorem}[{uniqueness of asymptotically  conical tangent flows, \cite[Thm 1.1]{CS19}}]\label{cor1}
Let $\mathcal{M}$ be a mean curvature flow, and suppose that some tangent flow at $(x, T)$ is $\mathcal{M}_{\Sigma}=\{\sqrt{-t}\Sigma\}_{t<0}$ with multiplicity one, where $\Sigma$ is an asymptotically conical shrinker. Then, the tangent flow at $(x, T)$ is unique. Moreover, there exists  $\varepsilon>0$, such that
the flow is smooth in $B_{\varepsilon}(x)\times (T-\varepsilon^2, T]\setminus\{(x, T)\}$, and $M_{T}\cap B_{\varepsilon}(x)$ has a conical singularity at $x$ smoothly modeled on the asymptotic cone of $\Sigma$.
\end{theorem}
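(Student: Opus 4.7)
The plan is to follow a Lojasiewicz--Simon strategy adapted to the non-compact shrinker $\Sigma$. First, I would set up the rescaled flow about $(x,T)$: define $\bar M_\tau = e^{\tau/2}(M_{T - e^{-\tau}} - x)$, so that the hypothesized tangent flow becomes the stationary solution $\Sigma$, and by hypothesis some sequence $\tau_j \to \infty$ satisfies $\bar M_{\tau_j} \to \Sigma$ smoothly on compact subsets of $\RR^{n+1}$. In a tubular neighborhood of $\Sigma$, write $\bar M_\tau$ as a normal graph $\{p + u(p,\tau)\nu_\Sigma(p) : p \in \Sigma\}$; the rescaled flow then becomes a parabolic PDE for $u$, and the Gaussian area $F(\bar M_\tau) = \int e^{-|x|^2/4}\, d\mu$ is monotonically non-increasing with derivative controlled by $\|\partial_\tau u\|^2$ thanks to Huisken's monotonicity.

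The heart of the argument is a Lojasiewicz--Simon inequality for $\Sigma$ of the form
$$|F(\Sigma + u) - F(\Sigma)|^{1-\theta} \leq C\, \|F'(\Sigma + u)\|$$
with $u$ measured in a weighted $C^{2,\alpha}$-type norm that incorporates the Gaussian weight $e^{-|x|^2/4}$ and enforces the correct decay along the conical ends. This is where the main obstacle lies: since $\Sigma$ is non-compact, its stability operator has essential spectrum, and the classical Lyapunov--Schmidt reduction to a finite-dimensional problem that works for compact shrinkers (as used by Colding--Minicozzi and Schulze) does not apply. The strategy is to exploit that the ends of $\Sigma$ are graphical perturbations of the asymptotic cone $\mathcal{C}$, which is itself Gaussian-stationary, and to couple an interior Lojasiewicz estimate with a separate end-analysis that contracts errors toward the cone in the weighted norm. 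The resulting analytic inequality is what drives the convergence.

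With the inequality in hand, I would run the standard Lojasiewicz convergence scheme: combining $\frac{d}{d\tau}F(\bar M_\tau) \leq -c\|\partial_\tau u\|^2$ with the Lojasiewicz inequality yields $\int_0^\infty \|\partial_\tau u\|\, d\tau < \infty$, so that $u(\cdot,\tau)$ converges as $\tau \to \infty$. Because every subsequential limit is $\Sigma$, the full limit is $\Sigma$, giving uniqueness of the tangent flow.

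Finally, to upgrade uniqueness into the structural statement, I would apply pseudolocality and Ecker--Huisken interior curvature estimates to the renormalized flow: since $\Sigma$ is smooth away from the origin and the rescaled flow is exponentially close to it on any fixed annular region, scaling back shows that $\mathcal{M}$ is smooth on $B_\varepsilon(x)\times (T-\varepsilon^2, T]\setminus\{(x,T)\}$. Passing to the limit $t\nearrow T$ on fixed radii around $x$ and matching the graph of $u$ with the cone then identifies $M_T \cap B_\varepsilon(x)$ with a graphical perturbation of the asymptotic cone of $\Sigma$, yielding the claimed conical singularity at $x$.
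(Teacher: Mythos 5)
This statement is not proved in the paper at all: it is quoted verbatim, with attribution, from Chodosh--Schulze \cite[Thm 1.1]{CS19}, and the paper's ``proof'' is simply that citation. Measured against that, your proposal is an attempt to reprove the cited theorem, and at the level of architecture it does follow the actual strategy of \cite{CS19}: pass to the rescaled flow $\bar M_\tau=e^{\tau/2}(M_{T-e^{-\tau}}-x)$, write it as a graph over $\Sigma$, use monotonicity of the Gaussian area $F$, feed a Lojasiewicz-type inequality into the standard convergence scheme to get uniqueness, and then use pseudolocality and Ecker--Huisken estimates to obtain the structural statement about $B_\varepsilon(x)\times(T-\varepsilon^2,T]\setminus\{(x,T)\}$ and the conical singularity of $M_T$ (the latter is also how the present paper's Proposition \ref{cor2} extends the estimates).

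The genuine gap is the centerpiece: the Lojasiewicz--Simon inequality for a non-compact asymptotically conical shrinker is asserted as a plan (``couple an interior Lojasiewicz estimate with a separate end-analysis that contracts errors toward the cone'') rather than established, and proving precisely that inequality is the entire technical content of \cite{CS19}; without it, the step $\int_0^\infty\|\partial_\tau u\|\,d\tau<\infty$ and hence uniqueness does not follow. Moreover, your diagnosis of the obstruction is off: in the Gaussian-weighted $L^2$ space the stability (drift) operator of an asymptotically conical shrinker has discrete spectrum, so the failure of the classical Lyapunov--Schmidt argument is not essential spectrum but the loss of uniform control at spatial infinity --- $\bar M_\tau$ is a graph over $\Sigma$ only on compact sets that grow with $\tau$, and the error on the conical end is not small in any naive weighted $C^{2,\alpha}$ norm, so the functional-analytic inequality cannot be applied directly to $u$ as you set it up. Chodosh--Schulze handle this by formulating the inequality in terms of quantities measured on balls whose radius depends on the scale of closeness, and by propagating graphicality outward along the cone (barriers/pseudolocality) before closing the loop; some substitute for that mechanism must appear in your argument, and it is exactly what is missing. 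The final structural claim also needs an argument that no other singularities accumulate at $(x,T)$ at intermediate scales between the shrinker's compact core and the fixed ball $B_\varepsilon(x)$; your last paragraph gestures at this via smoothness of the cone away from the origin plus pseudolocality, which is the right idea, but it again relies on the quantitative control along the end that the unproven inequality is supposed to deliver.
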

\noindent Here, we recall that an asymptotically conical shrinker $\Sigma$ is a smooth hypersurface which satisfies
$$
H_{\Sigma}=\frac{x\cdot\nu_{\Sigma}}{2},
$$
and 
$$\lim_{t \nearrow 0} \sqrt{-t} \Sigma=\mathcal{C}
$$
in $C^{\infty}_{loc}(\mathbb{R}^{n+1}-\{0\})$  with multiplicity one, where $\mathcal{C}$ is a cone over a closed hypersurface $\Gamma^{n-1}\subset S^{n}\subset \mathbb{R}^{n+1}$.\\\\
For our purpose, we need  the following more precise description of a neighborhood of a conical singularity:
\begin{proposition}\label{cor2}
Let $\mathcal{M}$ be a smooth mean curvature flow, and suppose that some tangent flow at $(x, T)$ is $\mathcal{M}_{\Sigma}=\{\sqrt{-t}\Sigma\}_{t<0}$ with multiplicity one, where $\Sigma$ is an  asymptotically conical shrinker. Then, for any $l\in \mathbb{N}_{+}$ and $b>0$,  we can find  $\varepsilon>0$, such that  for $t\in (T-\varepsilon^2,T)$, $M_{t}\cap B_{\varepsilon}(x)$ is a smooth graph of some function $u(t)$ over $\left(\sqrt{T-t}\Sigma+x\right)\cap B_{\varepsilon}(x)$ with
\begin{equation}\label{cs0}
    \|u(t)\|_{C^{l+1}}\leq b.
\end{equation}
\end{proposition}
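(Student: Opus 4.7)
The plan is to transfer to renormalized coordinates around the singularity, where Chodosh-Schulze's uniqueness theorem (Theorem \ref{cor1}) provides smooth closeness to $\Sigma$ on every fixed compact subset of $\mathbb{R}^{n+1}$, and then extend this control to the (in rescaled coordinates, growing) ball $B_{\varepsilon/\sqrt{T-t}}(0)$ by applying pseudolocality and Ecker-Huisken's interior curvature estimates to the renormalized flow at the asymptotic cone scale.

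First I would introduce $\bar{M}_\tau := e^{\tau/2}(M_{T-e^{-\tau}} - x)$, which evolves by the rescaled mean curvature flow; by Theorem \ref{cor1} combined with standard interior parabolic regularity, $\bar{M}_\tau \to \Sigma$ smoothly on compact subsets of $\mathbb{R}^{n+1}$, with rapid decay in $\tau$. Hence for every $R>0$ and $\eta>0$ there exists $\tau_0(R,\eta,l)$ such that for $\tau\geq\tau_0$, $\bar{M}_\tau \cap B_R(0)$ is a $C^{l+2}$ graph $v_\tau$ over $\Sigma \cap B_R(0)$ with $\|v_\tau\|_{C^{l+2}} < \eta$. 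Writing $\tau = -\log(T-t)$ and unwinding the rescaling gives $M_t \cap B_{R\sqrt{T-t}}(x)$ as a $C^{l+1}$ graph $u(z,t) = \sqrt{T-t}\, v_\tau((z-x)/\sqrt{T-t})$ over $(\sqrt{T-t}\Sigma + x) \cap B_{R\sqrt{T-t}}(x)$; the sharp decay of $v_\tau$ in $\tau$ coming from the Chodosh-Schulze argument is used to absorb the negative powers of $\sqrt{T-t}$ that appear after the chain rule on derivatives of order $k \geq 2$, yielding $\|u(t)\|_{C^{l+1}} \leq b$ throughout this inner parabolic region.

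Second, for the annular complement $\{R\sqrt{T-t} \leq |z-x| \leq \varepsilon\}$, I would work in the renormalized frame at points $y_0$ with $|y_0| \in [R,\varepsilon e^{\tau/2}]$. Since $\Sigma$ is asymptotically conical, at the extrinsic scale $|y_0|/4$ around $y_0$ the shrinker is $C^{l+2}$-close (with polynomial decay in $|y_0|$) to the tangent hyperplane of its asymptotic cone $\mathcal{C}$ along the appropriate ray. Picking an earlier rescaled time $\tau' \in [\tau_0,\tau]$ at which the compact-set convergence of the first step already applies near $y_0$, and applying pseudolocality and Ecker-Huisken's higher-derivative estimates to $\bar{M}_\tau$ at the scale $|y_0|/2$---on which the drift term $X/2$ in the renormalized equation produces only bounded perturbations once $\varepsilon$ is small---propagates this flatness forward and upgrades it to $C^{l+2}$-closeness of $\bar{M}_\tau$ to $\Sigma$ on $B_{|y_0|/4}(y_0)$. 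Undoing the renormalization gives a $C^{l+1}$-small graph of $M_t$ over $\sqrt{T-t}\Sigma + x$ on the original-frame ball $B_{c|y_0|\sqrt{T-t}}(\sqrt{T-t}y_0 + x)$, where the relevant rescaling factor is now the extrinsic scale $|y_0|\sqrt{T-t}$ rather than the self-similar scale $\sqrt{T-t}$, so the derivative bounds remain benign.

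Covering the outer annulus by such balls at dyadic values of $|y_0|$ and patching with the inner graph from the first step then produces a single graph function $u(t)$ on all of $(\sqrt{T-t}\Sigma + x) \cap B_\varepsilon(x)$ satisfying $\|u(t)\|_{C^{l+1}} \leq b$. The main obstacle is the second step: pseudolocality applied to the renormalized (drift-carrying) flow at large rescaled radii requires carefully tracking how the initial flatness is inherited from the compact-set convergence at earlier rescaled times, and verifying that the smallness constants combine uniformly with the inner-region bounds across all intermediate extrinsic scales $|z-x| \in [R\sqrt{T-t}, \varepsilon]$ without accumulation.
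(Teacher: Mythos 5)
Your proposal follows essentially the same route as the paper's proof: renormalize around the singularity, use the Chodosh--Schulze uniqueness theorem (Theorem \ref{cor1}) to get smooth graphical convergence to $\Sigma$ on the inner (self-similar scale) region, and then extend to the fixed scale $\varepsilon$ by writing the renormalized flow as small Lipschitz graphs over tangent planes along the conical end of $\Sigma$ and invoking pseudolocality together with Ecker--Huisken interior estimates, before rescaling back and patching. The only cosmetic differences are that you cover the intermediate annulus by dyadic balls of radius comparable to $|y_0|$ while the paper fixes the graph scale $\rho_*$ and varies the reference time $\tau_2$, and that you treat the drift term perturbatively whereas the paper transfers pseudolocality to the renormalized flow directly via the change of variables (Corollary \ref{psl}); both are implementations of the same argument.
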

\noindent For the proof, similarly as in Chodosh-Schulze \cite{CS19}, we will combine their uniqueness result (Theorem \ref{cor1})  with the pseudolocality theorem, which we now recall:
\begin{theorem}[{\cite[Theorem 1.5]{INS19}}, {\cite[Theorem 1.4]{CY07}}]\label{psl0}
Given $\delta>0$, there is $\gamma>0$ and $\rho<\infty$, such that if a mean curvature flow $\{M_{t}\}_{t\in [-1,0)}$ satisfies that $M_{-1}\cap B_{\rho}(0)$ is a Lipschitz graph over some region of the plane $P$ with Lipschitz constant smaller than $\gamma$ and $0\in M_{-1}$, then $M_{t}\cap B_{\rho}(0)$ intersects $B_{\delta}(0)$ and remains a $\delta$ Lipschitz graph within $B_{\delta}(0)$ over some region of the plane $P$ for all time $t\in [-1,0)$.   \end{theorem}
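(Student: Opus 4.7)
The plan is to combine the uniqueness of the conical tangent flow (Theorem \ref{cor1}) with parabolic pseudolocality (Theorem \ref{psl0}) and Ecker-Huisken interior estimates to upgrade the compact-set $C^\infty_{\mathrm{loc}}$ convergence of the parabolic renormalization to graphical control on an entire fixed-size ball $B_\varepsilon(x)$. Consider the renormalized flow
\[
\tilde{M}_\tau := e^{\tau/2}\bigl(M_{T-e^{-\tau}} - x\bigr),
\]
which by Theorem \ref{cor1} converges to $\Sigma$ in $C^{\infty}_{\mathrm{loc}}(\mathbb{R}^{n+1})$ as $\tau\to\infty$. Rescaling back, this immediately yields the desired $C^{l+1}$-graph estimate of $M_t$ over $(\sqrt{T-t}\,\Sigma+x)$ on the parabolic cap $B_{A\sqrt{T-t}}(x)$ for any fixed $A$, once $t$ is close enough to $T$. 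The task is therefore to extend this control from a parabolically shrinking ball to the outer annulus $B_\varepsilon(x)\setminus B_{A\sqrt{T-t}}(x)$.

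Choose $A$ large enough (depending on $l$, $b$, and $\Sigma$) so that $\Sigma$ is a $(b/10)$-small $C^{l+1}$ perturbation of its asymptotic cone $\mathcal{C}$ outside $B_A(0)$. For each point $y$ in the outer annulus with $r:=|y-x|\in(A\sqrt{T-t},\varepsilon)$, parabolically rescale $\mathcal{M}$ by the factor $r^{-1}$ centered at $(y,T)$. Combining the conical asymptotics of $\Sigma$ with the $C^\infty_{\mathrm{loc}}$ convergence from Theorem \ref{cor1}, the initial slice of this rescaled flow (at rescaled time $-1$) is a small Lipschitz graph over the tangent plane $P_y$ of $\mathcal{C}$ at $y$ on the unit ball around the rescaled image of $y$. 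Pseudolocality (Theorem \ref{psl0}) then propagates this small Lipschitz graph forward up to the rescaled singular time, which corresponds to $(T-r^2,T)$ in the original flow, so that $M_t$ remains a small Lipschitz graph over $P_y$ on a definite fraction of $B_r(y)$. Ecker-Huisken's interior estimate, being scale invariant, then upgrades this to a $C^{l+1}$-small graph on the same region. Since $\mathcal{C}$ is smooth at $y$ and $\Sigma$ is $C^{l+1}$-close to $\mathcal{C}$ outside $B_A(0)$, the graph over $P_y$ converts to a $C^{l+1}$-small graph over $(\sqrt{T-t}\,\Sigma+x)$ on a ball $B_{cr}(y)$ for some universal constant $c>0$. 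A finite cover of the annulus by such balls, together with the inner graph representation, then yields the proposition provided $\varepsilon$ is chosen sufficiently small.

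The main obstacle is the careful synchronization of the three scales $\sqrt{T-t}$, $r$, and $\varepsilon$, together with the task of showing that the pseudolocality-based graph over the varying tangent plane $P_y$ can be compared, with only a small $C^{l+1}$ error, to the intended graph over $\sqrt{T-t}\,\Sigma+x$. The key points making this compatibility work are the scale invariance of Ecker-Huisken's estimate and the conical asymptotics of $\Sigma$, which together make the change of reference surface $P_y \to \mathcal{C}+x \to \sqrt{T-t}\,\Sigma+x$ a uniformly small $C^{l+1}$ perturbation in $r$.
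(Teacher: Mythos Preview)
Your proposal is not a proof of the stated Theorem~\ref{psl0} at all. Theorem~\ref{psl0} is the pseudolocality theorem for mean curvature flow, quoted from \cite{INS19} and \cite{CY07}; the paper does not prove it but merely recalls it as a tool. What you have written is instead a proof sketch for Proposition~\ref{cor2} (the precise graphical description of a neighborhood of a conical singularity), and indeed you \emph{invoke} Theorem~\ref{psl0} as an ingredient in your argument. So there is a basic mismatch: you are proving the wrong statement, and your argument is circular with respect to the statement you were asked to establish.

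If the intended target was in fact Proposition~\ref{cor2}, then your outline is essentially the same strategy as the paper's: use Theorem~\ref{cor1} to get $C^\infty_{\mathrm{loc}}$ convergence of the renormalized flow to $\Sigma$, which handles the inner parabolic region $\{|y-x|\lesssim \sqrt{T-t}\}$; then in the outer annulus use pseudolocality to propagate small Lipschitz graphicality over tangent planes of $\Sigma$ forward in time, upgrade via Ecker--Huisken to $C^{l+1}$ bounds, and rescale back. The paper organizes the outer step slightly differently---it applies pseudolocality to the renormalized flow (Corollary~\ref{psl}) rather than rescaling pointwise by $r=|y-x|$---but the content is the same. Your version of the outer step, rescaling by $r^{-1}$ about each annulus point $y$, is a legitimate alternative and arguably more transparent about why the three scales interact correctly.
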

\noindent As a corollary, we have the following pseudolocality  for the renormalized flow $\widehat{\mathcal{M}}=\{\widehat{M}_{\tau}\}_{\tau\in [\tau', +\infty)}$, where $\widehat{M}_{\tau}=e^{\frac{\tau}{2}}M_{-e^{-\tau}}$ and $\tau=-\log(-t)$.
\begin{corollary}[pseudolocality for renormalized flow]\label{psl}
Given $\delta>0$, there is $\gamma>0$ and $\rho<\infty$, such that if the renormalized mean curvature flow $\{\widehat{M}_{\tau}\}_{\tau\in [\tau',+\infty)}$ satisfies that $\widehat{M}_{\tau'}\cap B_{e^{\frac{-\tau'}{2}}\rho}(0)$ is a Lipschitz graph over the plane $\{x_{n+1}=0\}$ with Lipschitz constant less than $\gamma$ and $0\in \widehat{M}_{\tau'}$, then $\widehat{M}_{\tau}\cap B_{e^{\frac{\tau-\tau'}{2}}\rho}(0)$ intersects $B_{e^{\frac{\tau-\tau'}{2}}\delta}(0)$ and remains a $\delta$ Lipschitz graph within $B_{e^{\frac{\tau-\tau'}{2}}\delta}(0)$ over the plane $\{x_{n+1}=0\}$ for all $\tau\in [\tau', +\infty)$.\\ \end{corollary}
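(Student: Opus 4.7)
\textbf{Plan for the proof of Corollary \ref{psl}.} The strategy is to unpack the definition of the renormalized flow $\widehat{M}_\tau = e^{\tau/2} M_{-e^{-\tau}}$, pass to the underlying mean curvature flow, apply the standard pseudolocality theorem (Theorem \ref{psl0}) after an appropriate parabolic rescaling, and then translate the conclusion back to renormalized coordinates. The key observation is that the renormalization is just a time-dependent spatial dilation, so Lipschitz graph structures over the plane $\{x_{n+1}=0\}$, Lipschitz constants, and the property $0 \in \widehat{M}_{\tau'}$ all transfer cleanly under the correspondence.

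\textbf{Step 1 (reduce to standard MCF on $[-1,0)$).} Parabolically rescale the ambient mean curvature flow by the factor $\lambda := e^{\tau'/2}$, i.e.\ define $\widetilde{M}_s := \lambda\, M_{s/\lambda^2} = e^{\tau'/2} M_{s e^{-\tau'}}$ for $s \in [-1,0)$. This is again a mean curvature flow, and by construction $\widetilde{M}_{-1} = e^{\tau'/2} M_{-e^{-\tau'}} = \widehat{M}_{\tau'}$. The hypothesis then exhibits $\widetilde{M}_{-1} \cap B_{\rho}(0)$, up to the stated dilation of the ball, as a Lipschitz graph of constant $<\gamma$ over $\{x_{n+1}=0\}$ with $0 \in \widetilde{M}_{-1}$, which is exactly the hypothesis of Theorem \ref{psl0}. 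Therefore Theorem \ref{psl0} gives that for every $s \in [-1,0)$, the slice $\widetilde{M}_s \cap B_{\rho}(0)$ meets $B_{\delta}(0)$ and is a $\delta$-Lipschitz graph over $\{x_{n+1}=0\}$ within $B_{\delta}(0)$.

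\textbf{Step 2 (translate back to the renormalized flow).} For each $\tau \in [\tau',\infty)$ set $s = -e^{-(\tau-\tau')} \in [-1,0)$; then from $\widetilde{M}_s = e^{\tau'/2} M_{-e^{-\tau}} = e^{(\tau'-\tau)/2}\widehat{M}_\tau$ we obtain $\widehat{M}_\tau = e^{(\tau-\tau')/2}\widetilde{M}_s$. Since dilation by $e^{(\tau-\tau')/2}$ sends $B_{\rho}(0)$ to $B_{e^{(\tau-\tau')/2}\rho}(0)$ and $B_{\delta}(0)$ to $B_{e^{(\tau-\tau')/2}\delta}(0)$, preserves the plane $\{x_{n+1}=0\}$, and preserves Lipschitz constants, applying this dilation to the conclusion of Step 1 yields precisely the statement of the corollary.

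\textbf{Expected main obstacle.} There is no conceptual difficulty; the entire argument is a bookkeeping exercise with parabolic rescalings. The only place where care is required is to verify that the factor $\lambda = e^{\tau'/2}$ matches the various radii in the hypothesis and conclusion: in particular, that under the dilation relating $\widehat{M}_{\tau'}$ and $\widetilde{M}_{-1}$, the ball appearing in the hypothesis aligns with the ball $B_\rho(0)$ on which Theorem \ref{psl0} is stated, and analogously for the time-$\tau$ slices. Once the exponents are tracked correctly, the corollary follows immediately from the ambient-space pseudolocality theorem.
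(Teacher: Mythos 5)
Your overall strategy is exactly the intended one: the paper offers no separate argument for Corollary \ref{psl}, treating it as an immediate consequence of Theorem \ref{psl0} under the parabolic rescaling $\widetilde{M}_s=e^{\tau'/2}M_{se^{-\tau'}}$, and your Step 2 translation back to the renormalized picture (the dilation $\widehat{M}_\tau=e^{(\tau-\tau')/2}\widetilde{M}_s$ with $s=-e^{-(\tau-\tau')}$, which carries $B_\rho(0)$ and $B_\delta(0)$ to the stated balls and preserves Lipschitz constants and the plane) is correct.

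However, the one point you yourself single out as requiring care is not actually closed, and it is the only nontrivial point. Since $\widetilde{M}_{-1}=e^{\tau'/2}M_{-e^{-\tau'}}=\widehat{M}_{\tau'}$ with \emph{no further dilation}, applying Theorem \ref{psl0} to $\widetilde{M}_s$ requires that $\widehat{M}_{\tau'}\cap B_{\rho}(0)$ be a $\gamma$-Lipschitz graph, whereas the hypothesis of the corollary as printed only provides graphicality on the smaller ball $B_{e^{-\tau'/2}\rho}(0)$; your phrase ``up to the stated dilation of the ball'' does not repair this, because there is no dilation left to absorb the factor $e^{-\tau'/2}$. In original-flow terms the printed hypothesis is a Lipschitz graph for $M_{-e^{-\tau'}}$ on a ball of radius $e^{-\tau'}\rho$, and the parabolically rescaled Theorem \ref{psl0} then controls the flow only for an elapsed time of order $e^{-2\tau'}$, far short of the required $e^{-\tau'}$ up to the singular time; so the statement with that radius cannot be obtained by this route (and is not what pseudolocality gives). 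What your argument does prove is the version with hypothesis ball $B_\rho(0)$ (equivalently $M_{-e^{-\tau'}}\cap B_{e^{-\tau'/2}\rho}(0)$ for the unrescaled flow), which is also the version actually invoked in the proof of Proposition \ref{cor2}, where the input is graphicality over balls of fixed size $2\rho_*$; the printed radius is best read as a misprint. You should state this correction explicitly rather than asserting that the hypotheses of Theorem \ref{psl0} are met ``exactly.''
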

\begin{proof}[{Proof of Proposition \ref{cor2}}]
Let $l\in \mathbb{N}_{+}$ and $b>0$ be fixed. For ease of notation, we suppose that $\mathcal{M}$ is defined on $[-1, 0)$ with the only conical singularity at $(0, 0)$.  Now, we consider the renormalized flow $\widehat{\mathcal{M}}=\{\widehat{M}_{\tau}\}_{\tau\in [\tau', +\infty)}$. By Theorem \ref{cor1} (uniqueness of conical tangent flow), $\widehat{M}_{\tau}$ converges to conical shrinker $\Sigma$ smoothly. Hence, for the given $l\in \mathbb{N}_{+}$, we can find monotone functions
$\rho(\tau)\rightarrow +\infty$, and $\sigma(\tau)\rightarrow 0$ as $\tau\rightarrow +\infty$, such that $\widehat{M}_{\tau}$ is a graph of some function $\hat{u}(\tau)$ over $\Sigma\cap B_{\rho(\tau)}$ with
\begin{equation}
    \|\hat{u}(\tau)\|_{C^{l+3}(B_{\rho(\tau)})}\leq \sigma(\tau),
\end{equation}
provided that $\tau$ is large enough.\\\\
Rescaling this back to the original flow, we see that 
$M_{t}\cap B_{\sqrt{|t|}\rho(\tau)}(0)$ is a graph of some function $u(t)$ over $\sqrt{-t}\Sigma$ for $|t|$ small enough. Moreover, for the given $b>0$ and $l\in \mathbb{N}_{+}$, we can find some $t_{1}=-e^{-\tau_{1}}$, where $\tau_{1}=\tau_{1}(b, l)$ is large enough, such that 
\begin{equation}\label{cs}
  \|u(t)\|_{C^{l+3}(B_{\sqrt{|t|}\rho(\tau_{1})})}
  \leq \|u(t)\|_{C^{l+3}(B_{\sqrt{|t|}\rho(\tau)})}\leq \sqrt{|t|}\sigma(\tau)\leq\sqrt{|t_{1}|}\sigma(\tau_{1})<b
\end{equation}
holds for all $t=-e^{-\tau}\in (t_{1}, 0)$.
Therefore, we have obtained the desired estimates \eqref{cs0} in the parabolic region $P=\{(x,t):|x|^2\leq \rho(\tau_{1})|t|, t\in [t_{1}, 0) \}$.\\\\ 
Next, we need to extend the estimates \eqref{cs0} from $P$ to some parabolic ball with center at $(0, 0)$. Fo the given $b$ and $t_{1}$ from above, let $\delta_{*}>0$ be a small constant to be fixed later. Then, $\gamma_{*}=\gamma_{*}(\frac{\delta_{*}}{2})<\frac{\delta_{*}}{2}$ and $\rho_{*}=\rho_{*}(\frac{\delta_{*}}{2})$ will be fixed according to Corollary \ref{psl} (pseudolality for renormalized flow). Because $\Sigma$ is an asymptotically conical shrinker, we can find $R_{1}=R_{1}(\Sigma, \gamma_{*}, \rho_{*})<\infty$ such that for $x\in \Sigma \cap B^{c}_{R_{1}}$, $\Sigma\cap B_{4\rho_{*}}(x)$ can be written as graph over $T_{x}\Sigma$ with $C^{1}$ norm less than $\frac{\gamma_{*}}{4}$.  Since $\widehat{M}_{\tau}$ converges to $\Sigma$ smoothly, for any $\tau_{2}=-\log(-t_{2})\geq \tau_{1}$, $\widehat{M}_{\tau_{2}}\cap B_{\rho(\tau_{2})}(0) \cap B^{c}_{R_{1}}(0)$ can be written as  graphs over $2\rho_{*}$-size balls on tangent planes of $\Sigma$ with $C^{1}$ norm less than $\frac{\gamma_{*}}{2}$,  provided that $\tau_{1}$ is large enough. \\\\
By Corollary \ref{psl} (pseudolality for renormalized flow), we see that for all $\tau\in [\tau_{2}, +\infty)$, $\widehat{M}_{\tau}\cap B_{\rho(\tau_{2})}(0)\cap  B^{c}_{R_{1}}(0)$ can be written as pieces of $\frac{\delta_{*}}{2}$ Lipschitz graphs over $e^{\frac{\tau-\tau_{2}}{2}}\frac{\delta_{*}}{2}$ size balls on tangent planes of $\Sigma$. 
By  Ecker-Huisken's curvature estimates for graphical flow in \cite[Thm 3.1, Thm 3.4]{EH91} (renormalized version), we see that $\widehat{M}_{\tau}\cap B_{\rho(\tau_{2})}(0)\cap  B^{c}_{R_{1}}(0)$ satisfies
\begin{equation}
    |\nabla^{l} A_{\widehat{M}_{\tau}}|\leq C\delta^{2}_{*} |\delta^2_{*}e^{\tau-\tau_{2}}|^{-\frac{l+1}{2}}\leq C\delta^{2}_{*},
\end{equation}
for all $\hat{x}\in \widehat{M}_{\tau}\cap B_{\rho(\tau_{2})}(0)\cap  B^{c}_{R_{1}}(0)$ and $\tau>\tau_{2}-\log(|t_{1}|-\omega)-2\log\delta_{*}$. Here, $0<\omega\ll |t_{1}|$ is a small constant, and $C<\infty$ is a constant depending on $\omega$ and the datum at time $\tau_{1}$. This implies that $\widehat{M}_{\tau}\cap B_{\rho(\tau_{2})}(0)\cap  B^{c}_{R_{1}}(0)$ is a graph of some function $\hat{u}(\tau)$ over $\Sigma$ with 
\begin{equation}
     \|\hat{u}(\tau)\|_{C^{l}(B_{\rho(\tau_{2})}\cap B^c_{R_{1}})}\leq C \delta^2_{*},
\end{equation}
for $\tau>\tau_{2}-\log(|t_{1}|-\omega)-2\log\delta_{*}$. \\\\
Now, we rescale this back to the original flow. Then, $M_t\cap B_{\sqrt{|t_{2}|}\rho(\tau_{2}))}\cap B^{c}_{\sqrt{|t_{2}|}R_{1}}$ can be written as graph of some function $u(t)$ over $\sqrt{-t}\Sigma$.
Note that $u(t)=\sqrt{|t|}\hat{u}(\tau)$  and $y=\sqrt{|t|}y'\in \sqrt{|t|}\Sigma$, we see that
\begin{equation}\label{deltaes}
     \|u(t)\|_{C^{l}(B_{\sqrt{|t_{2}|}\rho(\tau_{2}))}\cap B^{c}_{\sqrt{|t_{2}|}R_{1}})}\leq C \delta^2_{*}
\end{equation}
holds for $t\in (\delta^2_{*}(|t_{1}|-\omega)t_{2}, 0)$.\\\\
Now, we choose $\delta_{*}>0$ small enough, such that
\begin{equation}\label{deltab}
C\delta^2_{*}<b.
\end{equation}
Let $\varepsilon=\delta_{*}(|t_{1}|-\omega|)$. Noticing that $t_{2}$ is arbitrary and combing this with \eqref{deltaes}, \eqref{deltab} and \eqref{cs}, we obtain the estimation \eqref{cs0} in $B_{\varepsilon}(0)\times (-\varepsilon^2, 0)$.  This completes the proof of the theorem.\\\\
\end{proof}
\section{Decomposition of the flow}\label{secd}
The goal of this section is to decompose the flow into three parts: low curvature part, mean convex part and conical part. \\\\
Suppose that $\Omega$ is a compact domain in $\mathbb{R}^{n+1}$. Let $\mathcal{M}=\{M_{t}\}_{t\in [0,T)}\subset \Omega$  be a mean curvature flow of closed embedded hypersurfaces in $\mathbb{R}^{n+1}$ with first singular time $T$. Denote by $S_{T}(\mathcal{M})\subset \mathbb{R}^{n+1}$ the singular set at time $T$.   Assume that for each $x\in S_{T}(\mathcal{M})$,  some tangent flow  at  $(x,T)$ is a one $\mathbb{R}$-factor cylindrical or an  asymptotically conical shrinker with multiplicity one. 
\begin{theorem}[decomposition of flow]\label{decomposition} Under the above assumptions, for every $\varepsilon>0$,
there exist constants $\delta>\rho>0$, and a decomposition of the domain 
\begin{equation}\label{MCL}
    \Omega= \Omega_{M} \cup \Omega_{C} \cup \Omega_{L},
\end{equation}
such that the following statements hold:\\
\begin{enumerate}[(i)]
\item $\Omega_{M}$ is the union of finitely many balls,
\begin{equation}\label{N}
    \Omega_{M}=\mathop{\cup}\limits_{j=1}^{k} B_{\delta_{j}}(p_{j}),
\end{equation}
where  $\delta_{j}\geq \delta$ for $j=1,\dots, k$, such that
\begin{enumerate}[(a)]
\item For $j=1,\dots,k$, the flow $\{M_{t}\cap B_{\delta_{j}}(p_{j})\}_{t\in(T-\delta^2,T)}$ is mean convex.
\item Any $(p,t)\in \mathcal{M}\cap\left( \Omega_{M} \times(T-\delta^{2},T)\right)$ is $\varepsilon$-close (see Definition \ref{epsilon}) to either a round shrinking cylinder with one $\mathbb{R}$-factor or a translating bowl soliton. 
\end{enumerate}
\item $\Omega_{C}$ is the disjoint union of finitely many balls
\begin{equation}
    \Omega_{C}=\mathop{\cup}\limits_{i=1}^{m} B_{\delta}(x_{i}),
\end{equation}
such that for all $t\in(T-\delta^2, T)$ and $i\in\{1, \dots, m\}$, $ M_{t}\cap B_{\delta}(x_{i})$ can be written as $C^{l+1}$ graph over   $\left(\sqrt{T-t}\Sigma_{i}+x_{i}\right)\cap B_{\delta}(x_{i})$ with $C^{l+1}$ norm less than $\varepsilon$, where $\Sigma_{i}$ is an asymptotically conical shrinker.
\item For $t\in (T-\delta^2, T)$ and every $p\in M_{t}\cap \Omega_{L}$ or $p\in M_{t}\cap \Omega_{C}\setminus \mathop{\cup}\limits_{i=1}^{m}  B_{\frac{\delta}{2}}(x_{i})$, the regularity scale (see Definition \ref{mr})  at $(p,t)$  satisfies
\begin{equation}
    R(p,t)\geq \rho.
\end{equation}
\end{enumerate}
\end{theorem}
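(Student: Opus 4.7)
The plan is to build the decomposition in three stages, using Proposition \ref{cor2} for the conical singularities, Theorem \ref{cylindrical} for the neck singularities, and White's local regularity theorem (Theorem \ref{regular}) to control $\Omega_L$.

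First, since $\Omega$ is compact and $S_T(\mathcal{M})$ is closed, the singular set is compact. Write $S_C \subset S_T(\mathcal{M})$ for the conical singularities and $S_N := S_T(\mathcal{M}) \setminus S_C$ for the neck ones. By Theorem \ref{cor1} each point of $S_C$ has a punctured space-time neighborhood in which the flow is smooth; in particular $S_C$ is discrete, so by compactness it is finite, $S_C = \{x_1, \dots, x_m\}$. For each $x_i$, I would apply Proposition \ref{cor2} with parameters $l$ and $b = \varepsilon$ to obtain $\varepsilon_i > 0$ such that $M_t \cap B_{\varepsilon_i}(x_i)$ is a $C^{l+1}$ graph over $\sqrt{T-t}\Sigma_i + x_i$ with norm bounded by $\varepsilon$ for all $t \in (T - \varepsilon_i^2, T)$. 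Choosing $\delta \leq \min_i \varepsilon_i$ small enough that the balls $\{B_{\delta}(x_i)\}$ are pairwise disjoint and setting $\Omega_C := \bigcup_i B_{\delta}(x_i)$ gives (ii).

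Second, since $S_C$ is discrete, the remainder $K_N := S_N \setminus \bigcup_i B_{\delta/2}(x_i)$ is closed in $\Omega$ and hence compact. For each $y \in K_N$, Theorem \ref{cylindrical} yields a radius $\delta_y > 0$ such that every space-time point in $B_{\delta_y}(y) \times (T - \delta_y^2, T)$ is $\varepsilon$-close to a one $\mathbb{R}$-factor cylinder or a bowl soliton, and in particular the flow is mean convex there. Extracting a finite subcover $\{B_{\delta_{p_j}/2}(p_j)\}_{j=1}^{k}$ of $K_N$ and setting $\delta_j := \delta_{p_j}/2$ and $\Omega_M := \bigcup_j B_{\delta_j}(p_j)$, then shrinking the common $\delta$ further so that $\delta \leq \min_j \delta_j$ and $\Omega_M \cup \Omega_C \supset S_T(\mathcal{M})$, produces the mean convex part satisfying (i).

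Third, set $\Omega_L := \Omega \setminus (\Omega_M \cup \Omega_C)$. The closed set
\[
K := \overline{\Omega_L} \cup \bigl(\overline{\Omega_C} \setminus \bigcup_{i=1}^m B_{\delta/2}(x_i)\bigr)
\]
is compact and disjoint from $S_T(\mathcal{M})$ by construction. For every $p \in K \cap M_T$ the point $(p, T)$ is a regular point of the flow, so its Gaussian density equals $1$; by upper semicontinuity of Gaussian density together with Theorem \ref{regular}, a parabolic neighborhood of $(p, T)$ has regularity scale bounded below by a uniform constant. Compactness of $K$ then yields a single $\rho > 0$ for which (iii) holds at every space-time point of the flow inside $K \times (T - \rho^2, T)$, and shrinking $\delta$ once more so that $\delta \leq \rho$ completes the construction.

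The main obstacle will be book-keeping the constants so that a single pair $(\delta, \rho)$ works simultaneously for all three conditions: after choosing the initial radii from Proposition \ref{cor2} and Theorem \ref{cylindrical}, one has to shrink $\delta$ to separate the conical balls from the mean convex balls, to ensure $\Omega_M \cup \Omega_C$ already covers $S_T(\mathcal{M})$, and finally to satisfy the regularity-scale bound on $K$. A secondary subtlety is verifying that $S_N$ is closed — this uses that $S_C$ is discrete in $S_T(\mathcal{M})$, so a limit of neck singularities cannot be a conical singularity and must, under the hypothesis that every singularity is of neck or conical type, itself be a neck singularity.
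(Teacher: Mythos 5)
Your proposal is correct and follows essentially the same route as the paper: finiteness and isolation of the conical points via Theorem \ref{cor1} together with the graphical description of Proposition \ref{cor2}, a finite cover of the remaining compact cylindrical singular set by the mean convex canonical neighborhoods of Theorem \ref{cylindrical}, and a uniform regularity-scale bound on the complement by compactness and local regularity (the paper uses positivity plus the $1$-Lipschitz property of the regularity scale where you use density upper semicontinuity plus Theorem \ref{regular}, a cosmetic difference). Only note that the statement requires $\delta>\rho$, so at the last step you should decrease $\rho$ below $\delta$ rather than shrink $\delta$ below $\rho$; this is harmless since the regularity-scale lower bound persists when $\rho$ is decreased.
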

\begin{proof}
By the local regularity theorem (or Theorem \ref{regular}), we know that the set of all regular space-time points is open. This implies that the singular set $S_{T}(\mathcal{M})$ at first singular time $T$  is closed and bounded, hence it is compact.\\\\ 
By the isolated property of conical singularities from Theorem \ref{cor1}, we know there are only finitely many conical singularities $x_{1},\dots, x_{m}$ at time $T$. For each $i\in\{1, \dots, m\}$, we denote by $\{\sqrt{-t}\Sigma_{i}\}$ the tangent flow at the conical singularity $(x_{i},T)$. Now, according to Theorem \ref{cor1},  we can find $\delta'_{i}>0$, such that  $B_{\delta'_{i}}(x_{i})$ are disjoint, and for $t\in (T-\delta_{i}'^{2},T)$, $M_{t}\cap B_{\delta'_{i}}(x_{i})$ are smooth graphs over $\left(\sqrt{T-t}\Sigma_{i}+x_{i}\right)\cap B_{\delta'_{i}}(x_{i})$ with $C^{l+1}$ norm less than $\varepsilon$. \\\\
\noindent Hence, $S_{T}(\mathcal{M})\setminus \mathop{\cup}\limits_{i=1}^{m}\{x_{i}\}$ is still compact and all remaining singularities are one $\mathbb{R}$-factor cylindrical. Now, by Theorem \ref{cylindrical}, for each $p\in S_{T}(\mathcal{M})\setminus \mathop{\cup}\limits_{i=1}^{m}\{x_{i}\}$, we can find $\delta'>0$, such that the flow $\{M_{t}\cap B_{\delta'}(p)\}$ is mean convex and any $(p',t')\in \mathcal{M}\cap\left( B(p,\delta')\times(T-\delta'^{2},T)\right)$ is $\varepsilon$-close to either a cylindrical shrinker or a bowl soliton. By compactness of $S_{T}(\mathcal{M})\setminus \mathop{\cup}\limits_{i=1}^{m}\{x_{i}\}$, we can find finitely many cylindrical singularities $\{p_{1},\dots, p_{k}\}\subset S_{T}(\mathcal{M})\setminus \mathop{\cup}\limits_{i=1}^{m}\{x_{i}\}$ and $\delta_{j}$ corresponding to $p_{j}$ for $j\in\{1,\cdots, k\}$, 
such that the above properties in statement 2 hold and 
$S_{T}(\mathcal{M})\setminus \mathop{\cup}\limits_{i=1}^{m}\{x_{i}\}\subset \cup_{j=1}^{k} B_{\delta_{j}}(p_{j})$.\\\\
Now, we choose $\delta=\min\{\delta_{1},\cdots,\delta_{m},\delta'_{1},\cdots,\delta'_{k}\}$ and take 
$$\Omega_{M}=\mathop{\cup}\limits_{j=1}^{k} B_{\delta_{j}}(p_{j})$$
and 
$$ \Omega_{C}=\mathop{\cup}\limits_{i=1}^{m} B_{\delta}(x_{i}).$$
They satisfy the requirements in statement 1 and statement 2.\\\\
 Let $\Omega_{L}=\Omega\setminus(\Omega_{M}\cup \Omega_{C})$. Note that $\Omega_{L}$ and $\mathop{\cup}\limits_{i=1}^{m} \left( \overline{B_{\delta}(x_{i})}\setminus B_{\frac{\delta}{2}}(x_{i})\right)$ are compact sets. By the local regularity theorem, we know the regularity scale $R$ is  positive over their union. Combining this with the fact that regularity scale $R$ is $1$-Lipschitz, we can find some $\rho\in (0,\frac{\delta}{2})$, such that for 
$t\in (T-\delta^2, T)$ and every $p\in M_{t}\cap \Omega_{L}$ or $p\in M_{t}\cap \Omega_{C}\setminus \mathop{\cup}\limits_{i=1}^{m}  B_{\frac{\delta}{2}}(x_{i})$, we have
\begin{equation}
    R(p,t)\geq \rho.
\end{equation}
This completes the proof.
\end{proof}
\section{Reduction to the neck region}\label{rnc}
In this section, based on our decomposition from Section \ref{secd}, we will reduce to estimating the diameter in neck regions.\\\\
As in the Section \ref{secd}, let $\mathcal{M}=\{M_{t}\}_{t\in[0,T)}$ be a mean curvature flow of closed embedded hypersurfaces in $\mathbb{R}^{n+1}$ satisfying the assumptions of Theorem \ref{main}. Let $\Omega$ be a large ball that contains $M_{0}$. By Theorem \ref{decomposition}, for every $\varepsilon>0$, we can find constants $\delta>0, \rho>0$ and a decomposition
$$
    \Omega= \Omega_{M} \cup \Omega_{C} \cup \Omega_{L},
$$
into  a mean convex part $ \Omega_{M}$, a conical part $\Omega_{C}$, and a low curvature part $\Omega_{L}$.\\\\
First, we reduce to controlling the diameter in $\Omega_{M} \cup \Omega_{C} $. To this end, for any $\bar{t}\in (T-\delta^2,T)$, we consider
\begin{equation}
D(M_{\bar{t}}):=\sup\left\{l(\gamma):\gamma\, \text{is a minimizing geodesic in}\, (M_{\bar{t}}, d_{\bar{t}})\, \text{and}\, R<\frac{\rho}{2}\,  \text{along} \, \gamma \right\},
\end{equation}
where $R$ denotes the regularity scale (see Definition \ref{mr}).\\\\
 From Theorem \ref{decomposition}, we know the geodesic $\gamma$ in the above definition of $D(M_{\bar{t}})$ is contained in the region $\Omega_{M} \cup \Omega_{C}$. The next proposition reduces  to controlling the diameter in $\Omega_{M} \cup \Omega_{C}$.
\begin{proposition}
 There exists a constant $C<\infty$, such that
\begin{equation}
\text{diam}(M_{\bar{t}})\leq C(1+D(M_{\bar{t}})).
\end{equation}
\end{proposition}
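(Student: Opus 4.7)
The plan is to take a minimizing geodesic $\gamma : [0, L] \to M_{\bar t}$ whose length $L$ equals $\text{diam}(M_{\bar t}, d_{\bar t})$ up to arbitrarily small error, and to decompose it according to the regularity scale. Setting $I_{\mathrm{low}} = \{s : R(\gamma(s), \bar t) < \rho/2\}$ and $I_{\mathrm{high}} = [0, L] \setminus I_{\mathrm{low}}$, the connected components of these two sets produce subarcs $\gamma_{\mathrm{low},1}, \dots, \gamma_{\mathrm{low},J}$ and $\gamma_{\mathrm{high},1}, \dots, \gamma_{\mathrm{high},J'}$ of $\gamma$. Because every subarc of a minimizing geodesic is itself minimizing and each $\gamma_{\mathrm{low},j}$ lies entirely in $\{R < \rho/2\}$, the definition of $D(M_{\bar t})$ directly yields $l(\gamma_{\mathrm{low},j}) \leq D(M_{\bar t})$ for every $j$.

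The next step will be to control the high-regularity portion and the number of low-regularity subarcs. On $\{R \geq \rho/2\}$ one has $|A| \leq 2/\rho$, and area non-increase under mean curvature flow gives $\mathcal{H}^{n}(M_{\bar t}) \leq \mathcal{H}^{n}(M_{0})$. A standard packing argument, combined with White's local regularity theorem to obtain a lower bound on the injectivity radius at points of $\{R \geq \rho/2\}$ and graphical representations on $\rho/4$-balls, produces finitely many points $q_1, \dots, q_{N_0} \in M_{\bar t} \cap \{R \geq \rho/2\}$ whose intrinsic balls of radius $\rho/4$ cover $M_{\bar t} \cap \{R \geq \rho/2\}$, with $N_0$ depending only on $\mathcal{H}^{n}(M_0)$, $\rho$, and $n$. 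Inside each such intrinsic ball, the bounded curvature and the graphical structure ensure that a minimizing geodesic cannot re-enter, so its intersection with the ball has length at most $\rho/2$. Thus the total length of $\gamma|_{I_{\mathrm{high}}}$ is bounded by $N_0 \rho/2$. Counting transitions between the two regions forces $J \leq N_0 + 1$, and summing gives
\[
L \leq (N_0 + 1) D(M_{\bar t}) + N_0 \rho/2 \leq C\bigl(1 + D(M_{\bar t})\bigr),
\]
for a constant $C$ depending only on $\mathcal{H}^{n}(M_0)$, $\rho$, and $n$.

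The main obstacle will be the covering step in the high-regularity region: one needs a uniform bound on $N_0$ and a verification that in each covering ball $\gamma$ enters only once, with length at most $\rho/2$. The uniform bound on $N_0$ follows from $|A| \leq 2/\rho$ combined with the area bound $\mathcal{H}^{n}(M_{\bar t}) \leq \mathcal{H}^{n}(M_0)$ via volume comparison in the intrinsic metric; the single-entry property and length bound inside each ball come from the injectivity-radius estimate supplied by White's local regularity theorem together with the minimality of $\gamma$, which precludes a minimizing geodesic from leaving and re-entering an intrinsic ball of radius below the injectivity radius.
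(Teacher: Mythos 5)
Your overall strategy---split a minimizing geodesic according to whether the regularity scale is below or above $\rho/2$, bound each low piece by $D(M_{\bar t})$, and control the high part by a packing argument using $|A|\le 2/\rho$ together with the area bound $\mathcal{H}^{n}(M_{\bar t})\le \mathcal{H}^{n}(M_{0})$---is the same as the paper's. However, there is a genuine gap at the step ``counting transitions between the two regions forces $J\le N_{0}+1$''. The connected components of $I_{\mathrm{high}}=\{s:R(\gamma(s),\bar t)\ge \rho/2\}$ are not controlled by the number $N_{0}$ of covering balls: the function $s\mapsto R(\gamma(s),\bar t)$ can cross the threshold $\rho/2$ arbitrarily many times (a priori even infinitely often) while $\gamma$ stays inside a single ball $B(q_{i},\rho/4)$, producing arbitrarily many low components; and since the definition of $D(M_{\bar t})$ requires $R<\rho/2$ strictly along the whole geodesic, you cannot merge two low components across such an intervening high point. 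So, as written, the inequality $L\le (N_{0}+1)D(M_{\bar t})+N_{0}\rho/2$ is not established.

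The repair is to group the high set into a controlled number of parameter intervals instead of using its connected components. For example, for each covering ball $B(q_{i},\rho/4)$ met by $\gamma$ at a high point, take the interval from the first entry time to the last exit time of $\gamma$ in that ball; by minimality and the triangle inequality each such interval has length at most $\rho/2$, there are at most $N_{0}$ of them, their union contains $I_{\mathrm{high}}$, and therefore the at most $N_{0}+1$ complementary arcs lie entirely in $\{R<\rho/2\}$ and each has length at most $D(M_{\bar t})$. This is essentially the paper's device: there one chooses a maximal $1$-separated collection of high points $s_{1},\dots,s_{N}$ along $\gamma$, so that maximality forces the at most $N+1$ complementary arcs to be entirely low, while the separation makes the geodesic balls $B_{i}$ disjoint for the counting bound $N\le \mathcal{A}/c(\rho)$. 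Two smaller points: White's local regularity theorem and an injectivity radius bound are not needed (and bounded $|A|$ alone does not immediately yield an injectivity radius bound without further argument); the definition of the regularity scale already gives $|A|\le 2/\rho$ on extrinsic balls of radius $\rho/2$, hence local graphical structure and the lower area bound used in the packing step, and the bound $\rho/2$ on the length of $\gamma$ inside each $\rho/4$-ball follows from minimality and the triangle inequality alone, without any single-entry property.
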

\begin{proof}
Let $\gamma:[0,L] \rightarrow  (M_{\bar{t}}, d_{\bar{t}})$ be a minimizing geodesic parametrized by arclength. We choose a maximal collection $s_{1}, \dots, s_{N} \in [0, L]$, such that $R(\gamma(s_{i}),\bar{t})\geq \frac{\rho}{2}$ and $|s_{i}-s_{j}|\geq 1$ for $i\in\{1, \dots, N\}$.  Let $B_{i}$ to a geodesic ball with center $\gamma(s_{i})$ and radius $\frac{1}{2}$. These balls are disjoint. Because the regularity scale $R$ is bounded below by $\rho/2$ at $(s_{i}, \bar{t})$, we have a uniform bound on the second fundamental form within  balls of definite size with center $\gamma(s_{i})$.  By Gauss-Codazzi equation, the sectional curvatures are uniformly bounded within these balls of definite size. According to volume comparison, we get a uniform lower bound $c(\rho)$ for the volume of $B_{i}$, i.e
$$
\mathcal{H}^{n}(B_{i})\geq c(\rho).
$$
Denoting by $\mathcal{A}$  the area of initial surface of the flow, we have
$$
N\leq\frac{\mathcal{A}}{c(\rho)}=:C_{0}.
$$
Now, we estimate the length of $\gamma$ by adding the length of the $N$ pieces $\gamma([s_{i}-\frac{1}{2},s_{i}+\frac{1}{2}])$ for $i\in \{1,\dots, N\}$, and the other $N+1$  disjoint arcs. Since $N\leq C_{0}$, we conclude that 
$$
\text{diam}(M_{\bar{t}})\leq C_{0}+(C_{0}+1)D(M_{\bar{t}}).
$$
Setting  $C=C_{0}+1$, this proves the proposition.
\end{proof}
\noindent Next, we reduce to controlling the diameter in $\Omega_{M}$. We define
$$
D'(M_{\bar{t}}):=\sup\left\{l(\gamma):\gamma\, \text{is a minimizing geodesic in}\, (M_{\bar{t}}, d_{\bar{t}})\, \text{and}\, R<\frac{\rho}{2}\,  \text{along} \, \gamma\, \text{and}\, \gamma\subset \Omega_{M}\right\}
$$
Then, we have the following proposition.
\begin{proposition}\label{dconical}
There exists a constant $C<\infty$, such that
\begin{equation}
D(M_{\bar{t}})\leq C(D'(M_{\bar{t}})+1).   
\end{equation}
\end{proposition}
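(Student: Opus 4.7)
The strategy is to show that any minimizing geodesic $\gamma$ with regularity scale $R<\rho/2$ along it picks up only a bounded amount of extra length in $\Omega_{C}$ beyond what lies in $\Omega_{M}$. By Theorem~\ref{decomposition}(iii), such a $\gamma$ must lie entirely in $\Omega_{M}\cup\bigcup_{i=1}^{m}B_{\delta/2}(x_{i})$, and its total length inside $\Omega_{M}$ is at most $D'(M_{\bar t})$ by definition. Thus the whole game is to bound the total length of $\gamma$ inside each conical ball $B_{\delta}(x_{i})$ by a constant independent of $\bar t$ and of $\gamma$.

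The geometric input I would establish first is a uniform bound on the intrinsic diameter of each connected component of $M_{\bar t}\cap B_{\delta}(x_{i})$. By Theorem~\ref{decomposition}(ii), $M_{\bar t}\cap B_{\delta}(x_{i})$ is a $C^{l+1}$-graph of $C^{l+1}$-norm $\leq\varepsilon$ over $(\sqrt{T-\bar t}\,\Sigma_{i}+x_{i})\cap B_{\delta}(x_{i})$. Rescaling from $x_i$ by $1/\sqrt{T-\bar t}$ identifies the base (up to an overall metric factor) with $\Sigma_{i}\cap B_{\delta/\sqrt{T-\bar t}}$. Since $\Sigma_{i}$ is asymptotically conical, it has finitely many ends and the intrinsic diameter of $\Sigma_{i}\cap B_{R}$ grows at most linearly in $R$ for large $R$ (each end looks, up to exponentially decaying corrections, like a truncated cone). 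Scaling back, each connected component of $M_{\bar t}\cap B_{\delta}(x_{i})$ has intrinsic diameter at most some constant $C_{i}'$ and the number of components is uniformly bounded, both independently of $\bar t\in(T-\delta^{2},T)$.

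Given this, I bound the contribution of $\Omega_{C}$ to $\gamma$ by a minimality argument. Decompose $\gamma\cap B_{\delta}(x_{i})$ into maximal sub-arcs and group them by the connected component $K$ of $M_{\bar t}\cap B_{\delta}(x_{i})$ they lie in. If $\gamma|_{[a_{k_{1}},b_{k_{1}}]},\ldots,\gamma|_{[a_{k_{j}},b_{k_{j}}]}$ are the sub-arcs in $K$ (indexed in $\gamma$-parameter order), then by minimality of $\gamma$, the sub-segment $\gamma|_{[a_{k_{1}},b_{k_{j}}]}$ has length equal to $d_{\bar t}(\gamma(a_{k_{1}}),\gamma(b_{k_{j}}))$. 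Since both endpoints lie in $K$, there is a competitor path inside $K$ of length $\leq C_{i}'$, so this sub-segment has length $\leq C_{i}'$, and in particular the total length of $\gamma$ inside $K$ is $\leq C_{i}'$. Summing over the finitely many components and over $i=1,\ldots,m$ gives a universal bound $\mathrm{length}(\gamma\cap\Omega_{C})\leq C''$, hence $\mathrm{length}(\gamma)\leq D'(M_{\bar t})+C''$, and the proposition follows by taking $C=C''+1$.

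The main obstacle is the uniform intrinsic diameter estimate in the second step: although the graphs $M_{\bar t}\cap B_{\delta}(x_{i})$ concentrate onto the singular cone as $\bar t\nearrow T$, the scaling calculation combined with the asymptotically conical structure of $\Sigma_{i}$ must be used carefully to produce a diameter bound that is stable in $\bar t$. Once that bound is in hand, the combinatorial argument using minimality of $\gamma$ is essentially routine.
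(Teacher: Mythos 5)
Your proof is correct and follows essentially the same route as the paper: both control the conical contribution by using that $M_{\bar t}\cap B_{\delta}(x_{i})$ is a small graph over $\left(\sqrt{T-\bar t}\,\Sigma_{i}+x_{i}\right)\cap B_{\delta}(x_{i})$, so the rescaled asymptotically conical geometry gives a length/diameter bound uniform in $\bar t$, and then split the minimizing geodesic into conical and mean-convex sub-arcs. The only slip is the claim that the length of $\gamma$ inside $\Omega_{M}$ is at most $D'(M_{\bar t})$ ``by definition'': $D'$ bounds each connected sub-arc of $\gamma$ lying in $\Omega_{M}$, not their union, so you need the factor counting these sub-arcs (bounded via your component count plus one), which is exactly why the statement, like the paper's bound $D(M_{\bar{t}}) \leq (m+1) D'(M_{\bar{t}})+ \sum_{i=1}^{m}C(\Sigma_{i},\varepsilon)$, carries a multiplicative constant.
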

\begin{proof}
For ease of notation, we first analyze the case where the flow has only one conical singularity. After a translation in space-time, we can assume that this singularity is at $(0,0)$. Let $\Sigma$ be the time $-1$ slice of the tangent flow at this point.  By Theorem \ref{decomposition}, $M_{\bar{t}}\cap B_{\delta}(0)$ is a $C^{l+1}$ graph of function $u$ over $(-\bar{t})^{\frac{1}{2}}\Sigma$ with $C^{l+1}$ norm less than $\varepsilon$. We can pushforward the metric of  $(-\bar{t})^{\frac{1}{2}}\Sigma$ to $M_{\bar{t}}$  via $u$. Then, the  original metric and the pushforward metric of $M_{\bar{t}}\cap B_{\delta}(0)$ are uniformly equivalent for $\bar{t}\in (-\delta^2, 0)$.
Since $\Sigma$ is an asymptotically  conical shrinker, the family of metrics of $(-\bar{t})^{\frac{1}{2}}\Sigma\cap B_{\delta}(0)$ is uniformly bounded. Hence, the family of original metrics on $M_{\bar{t}}\cap B_{\delta}(0)$ is uniformly bounded. This  implies that $l(\gamma_{\bar{t}})$, the length of geodesics $\gamma_{\bar{t}}\subset M_{\bar{t}}\cap B_{\delta}(0)$, is uniformly bounded by some constant $C(\Sigma, \varepsilon)$ for $\bar{t}\in (-\delta^{2}, 0)$.\\\\
The argument for finitely many conical singularities case is similar. Hence, for $\bar{t}\in (-\delta^2,0)$, we obtain
$$
D(M_{\bar{t}}) \leq (m+1) D'(M_{\bar{t}})+ \sum_{i=1}^{m}C(\Sigma_{i},\varepsilon),
$$
where $C(\Sigma_{i},\varepsilon)$ is a constant depending on the asymptotically conical shrinker $\Sigma_{i}$ at $x_{i}$ as in Theorem \ref{decomposition}. Choosing $ C=m+1+\sum_{i=1}^{m}C(\Sigma_{i},\varepsilon)$,  this proves the proposition.
\end{proof}
\noindent The next step is to reduce to controlling the diameter in neck regions. We first recall the definition of strong $\varepsilon$-neck and very strong $\varepsilon$-neck.
\begin{definition}[strong $\varepsilon$-neck and very strong $\varepsilon$-neck]\label{str}
A mean curvature flow $\mathcal{M}$ is said to have a strong $\varepsilon$-neck with center $p$ and radius $r$ at time $t_{0}$ if the rescaled flow  $\{r^{-1}(M_{t_{0}+r^2t}-p)\}_{t\in(-1,0]}$ is $\varepsilon$-close in $C^{\lfloor\frac{1}{\varepsilon}\rfloor}$ sense in $B_{\varepsilon^{-1}}(0)\times (-1, 0]$  to
$\{O_{p}(S^{n-1}(\sqrt{1-2(n-1)t})\times \mathbb{R})\}_{t\in(-1,0]}$ for some  $O_{p}\in SO(n)$.  If  we can replace the interval $(-1,0]$ by $(-2\mathcal{T},0]$, where $\mathcal{T}$ is from Proposition \ref{t}, then we say that $\mathcal{M}$ has a very strong $\varepsilon$-neck.
\end{definition}
\begin{definition}[$\varepsilon$-tube]\label{tube}
$N\subset M_{\bar{t}}$ is called an $\varepsilon$-tube if $N$ is diffeomorphic to a cylinder, and each $x\in N$ lies on the central sphere of a very strong $\varepsilon$-neck (see Definition \ref{str}) of $\mathcal{M}$ with radius $(n-1)H^{-1}(x)$ at time $\bar{t}$.
\end{definition}
\noindent Now, we define
$$
L(M_{\bar{t}}):=\sup \left\{ \text{diam}(N, d_{\bar{t}}): N\subset M_{\bar{t}} \, \text{is an} \,\varepsilon\text{-}\text{tube}\, \text{with regularity scale}\, R<\frac{\rho}{2}\,\text{and}\,  N\subset \Omega_{M} \right\}.
$$
Then, we have the following proposition.
\begin{proposition}[reduction to neck region]\label{reduce to neck}
There exists a constant $C<\infty$, such that 
\begin{equation}
 D'(M_{\bar{t}})\leq  C(L(M_{\bar{t}})+1)  
\end{equation}
\end{proposition}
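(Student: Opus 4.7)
The plan is to adapt the tube-versus-cap decomposition from \cite[Prop.~3.2]{GH17} to our setting, where mean convexity is only available inside $\Omega_M$. Fix a minimizing geodesic $\gamma:[0,L]\to (M_{\bar t},d_{\bar t})$ contributing to $D'(M_{\bar t})$, so $\gamma\subset \Omega_M$ and $R(\gamma(s),\bar t)<\rho/2$ along $\gamma$. By Theorem \ref{decomposition}(i)(b), for $\varepsilon$ small enough each space-time point $(\gamma(s),\bar t)$ is $\varepsilon$-close to exactly one of two mutually exclusive models: a round shrinking one-$\mathbb R$-factor cylinder, or a translating bowl soliton. Call these points of \emph{cylindrical type} and \emph{bowl type}, respectively, and partition $[0,L]$ into the corresponding maximal open subintervals.

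On cylindrical-type points I would upgrade the single-slice $\varepsilon$-closeness to a very strong $\varepsilon$-neck by invoking the backwards stability statement Proposition \ref{t}: it promotes the cylindrical approximation from one time slice to the whole interval $(-2\mathcal T,0]$, so $\gamma(s)$ lies on the central sphere of a very strong $\varepsilon$-neck of radius $(n-1)H^{-1}(\gamma(s))$ (Definition \ref{str}). Continuity of the neck axes and the comparability of $H^{-1}$ at nearby points of the geodesic then let me glue all necks over a single maximal cylindrical interval $T_\alpha$ into one $\varepsilon$-tube $N_\alpha\subset M_{\bar t}$ in the sense of Definition \ref{tube}, with regularity scale $<\rho/2$ and contained in $\Omega_M$. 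Consequently
$$
l(\gamma|_{T_\alpha})\leq \mathrm{diam}(N_\alpha,d_{\bar t})\leq L(M_{\bar t}).
$$
On a bowl-type point, the $\varepsilon$-closeness to a bowl soliton places $\gamma(s)$ within intrinsic distance $C_1 H^{-1}(\gamma(s))\leq C_1\rho$ of the approximate tip, and on each maximal bowl-type interval $C_\beta$ the arc $\gamma(C_\beta)$ lies inside a single cap region of intrinsic diameter $\leq C_1\rho$, so $l(\gamma|_{C_\beta})\leq C_1\rho$.

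To finish I need a bound on the number of cap intervals $C_\beta$ that is uniform in $\bar t$. Here I would argue that each bowl-soliton cap in $\Omega_M$ encloses a region of definite area on $M_{\bar t}$: normalized by $H^{-1}$, a cap region of a bowl has area bounded below by a dimensional constant, and the lower bound $H^{-1}\gtrsim \text{regularity scale}$ together with the upper cutoff $R<\rho/2$ gives a definite $\rho$-dependent lower bound on the $\mathcal H^n$-measure of each cap component. Since $\gamma$ is minimizing, distinct cap intervals $C_\beta$ correspond to disjoint (or essentially disjoint) cap regions on $M_{\bar t}$, and Huisken's monotonicity together with the initial area give a uniform upper bound $\mathcal H^n(M_{\bar t})\leq \mathcal H^n(M_0)$. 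Dividing bounds the number of cap intervals by a constant $C_0$ depending only on $M_0$ and $\rho$, and since tube and cap intervals alternate, the number of tube intervals is at most $C_0+1$. Combining the contributions,
$$
l(\gamma)=\sum_\alpha l(\gamma|_{T_\alpha})+\sum_\beta l(\gamma|_{C_\beta})\leq (C_0+1)L(M_{\bar t})+C_0C_1\rho \leq C\bigl(L(M_{\bar t})+1\bigr),
$$
which is the stated inequality.

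The main obstacle is the cap-counting step in the last paragraph: one must verify quantitatively that each excursion of a minimizing geodesic into a bowl-type neighborhood contributes a definite amount of area, and that caps from different excursions do not overlap significantly. Mean convexity in $\Omega_M$ and the canonical-neighborhood control from Theorem \ref{cylindrical} are what make this rigorous, but the bookkeeping (keeping track of orientation of $\gamma$ through each cap, and ensuring the cap regions can be chosen disjoint up to controlled overlap) is the technical heart of the reduction.
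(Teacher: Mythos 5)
Your decomposition into cylindrical-type and bowl-type intervals is fine as far as it goes, but the step you yourself flag as the technical heart---bounding the number of bowl-type excursions by an area argument---has a genuine gap, and in fact the inequality you invoke goes the wrong way. A cap region in a canonical neighborhood has extrinsic scale comparable to $H^{-1}$, hence $\mathcal H^{n}$-measure comparable to $H^{-n}$; the hypothesis $R<\rho/2$ along $\gamma$ bounds this scale from \emph{above}, not below. As $\bar t\nearrow T$ the curvature blows up, so $H^{-1}$ (and with it the area of a cap) can be arbitrarily small, and dividing $\mathcal H^{n}(M_{\bar t})\le \mathcal H^{n}(M_0)$ by a per-cap area that tends to zero gives no uniform bound on the number of caps. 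There is no lower curvature-scale cutoff available in $\Omega_M$, so this counting scheme cannot be repaired without new input.

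The paper avoids counting altogether by using the structural consequence of the mean convex canonical neighborhoods (Theorem \ref{cylindrical} via Theorem \ref{decomposition}): since every point of $\gamma$ has a canonical neighborhood modeled on a one $\mathbb{R}$-factor cylinder or a bowl, and a bowl has exactly one end, the connected high-curvature region containing $\gamma$ is a single $\varepsilon$-tube which is either capped at its (at most two) ends or has its ends identified. Thus the cap contribution is a fixed additive constant of order $\rho$ (mean curvature on caps is bounded below by $c\rho^{-1}$), the tube contribution is bounded by $L(M_{\bar t})$, and in the identified-ends case one removes a small controlled piece to reduce to the tube case; no excursion counting is needed. Two smaller remarks: the passage from single-slice closeness to a \emph{very strong} $\varepsilon$-neck does not require Proposition \ref{t} (which is stated only for the exact model flows); it follows directly from Definition \ref{epsilon}, whose closeness already holds on the time interval $(-\varepsilon^{-2},0)$, once $\varepsilon$ is small compared with $\mathcal{T}^{-1/2}$. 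Also, a point $\varepsilon$-close to a bowl need not be near the tip (it may sit on the neck-like part of the bowl), so "bowl type'' and "cylindrical type'' are not mutually exclusive as you assert; the correct dichotomy is neck points versus cap points, which is exactly what the tube-with-caps structure encodes.
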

\begin{proof}
Let $\gamma$ be a minimizing geodesic in $M_{\bar{t}}$ with $R<\frac{\rho}{2}$ along $\gamma$, and $\gamma\subset\Omega_{M}$. According to Theorem \ref{decomposition}, we know that for $\varepsilon>0$ small enough, $\gamma$ is contained in an $\varepsilon$-tube possibly with caps as ends, or with its ends identified.  Notice that the mean curvatures of the points on the caps are bounded below by $C\rho^{-1}$. Hence, the caps have diameter bounded by $C(\varepsilon)\rho^{-1}$. If the ends of $\varepsilon$-tube are identified, we only need to remove small controlled pieces and reduce the argument to $\varepsilon$-tube case. This implies the assertion.
\end{proof}
\section{Backwards stability and small axis tilt}\label{bssat}
In this section, we prove Proposition \ref{t} (backwards stability) and Proposition \ref{st} (small axis tilt). 
\noindent Our backwards stability for necks on the bowl soliton is a special case of what has been observed in more general context for Ricci flow by Kleiner-Lott in \cite{KL17}. 
\begin{proposition}[backwards stability]\label{t}
For all $\delta_{0}>0$ and $\delta_{1}>0$ small enough, we can find  $\mathcal{T}=\mathcal{T}(\delta_{0}, \delta_{1})<\infty$ with the following property. Suppose $\mathcal{M}$ is a cylindrical flow 
or a translating bowl, and $\mathcal{M}$ has a strong $\delta_{0}$-neck (see Definition \ref{str}) with center $p$ and radius $\sqrt{2(n-1)}$ at time $-1$. Then, for all $t\in (-\infty, \mathcal{T}]$,  the flow $\mathcal{M}$ has a strong $\delta_{1}$-neck with center $p$ and radius $\sqrt{2(n-1)|t|}$ at time $t$.
\end{proposition}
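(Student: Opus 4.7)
The plan is to treat the two candidate model flows for $\mathcal{M}$ separately, since each is a highly rigid object but for different reasons. For the shrinking cylindrical case, where $\mathcal{M} = \{O(S^{n-1}_{\sqrt{-2(n-1)t}} \times \mathbb{R})\}_{t < 0}$ up to a rigid motion, backwards stability is immediate from exact self-similarity. Indeed, for any $t < 0$ and any $p \in M_t$, a direct computation shows that the rescaled flow $r^{-1}(M_{t + r^2 s} - p)$ with $r = \sqrt{2(n-1)|t|}$ coincides identically, up to the rotation aligning $p$ with the canonical neck center, with the model $\{S^{n-1}_{\sqrt{1 - 2(n-1)s}} \times \mathbb{R}\}_{s \in (-1, 0]}$. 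Hence the strong $\delta_1$-neck condition is satisfied with zero error for every $\delta_1 > 0$ and every $t < 0$, so $\mathcal{T}$ can be chosen freely in this case.

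For the translating bowl $\mathcal{M} = \{B + tv\}_{t \in \mathbb{R}}$, I would exploit the asymptotically cylindrical structure of the bowl at spatial infinity (in the sense of Altschuler--Wu) together with its translation invariance in $t$. Taking $\delta_0$ small enough, the existence of a strong $\delta_0$-neck at $p$ of radius $\sqrt{2(n-1)}$ at time $-1$ forces $p$ to lie deep in the asymptotically cylindrical region of $M_{-1}$, uniformly far from the tip. By translation invariance, the local geometry at the corresponding moving point on $M_t$ is isometric to that at $(p, -1)$ for every earlier $t$. Comparing the rescaled evolution against the shrinking cylinder model over the fixed window $B_{\delta_1^{-1}}(0) \times (-1, 0]$, the discrepancy splits into two contributions that both vanish as $|t| \to \infty$: the bowl's translation velocity, which rescales to order $|v|/\sqrt{|t|}$, and the deviation of the bowl's cross-section near $p$ from a round cylinder, which vanishes as the tip recedes. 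Ecker--Huisken interior derivative estimates then upgrade $C^0$-closeness to the required $C^{\lfloor 1/\delta_1 \rfloor}$-closeness throughout the window.

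The main obstacle I anticipate is the quantitative comparison in the bowl case, since the translating bowl is not literally self-similar. The resolution is that after parabolic rescaling by the natural scale $\sqrt{2(n-1)|t|}$, both the translation velocity and the deviation of the bowl from its asymptotic cylinder become arbitrarily small on any prescribed rescaled spacetime window as $|t| \to \infty$. The required $\mathcal{T} = \mathcal{T}(\delta_0, \delta_1)$ is then determined quantitatively by tracking the rate at which these two errors decay, in the same spirit as the Ricci flow backwards stability of Kleiner--Lott cited just before the statement.
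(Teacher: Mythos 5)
Your overall strategy (treat the cylinder and the bowl separately, and show that after parabolic rescaling at scale $\sqrt{2(n-1)|t|}$ around the fixed center $p$ the flow approaches the shrinking-cylinder model as $t\to-\infty$) is the same as the paper's, but both halves of the execution have gaps. In the cylinder case, the claim of ``zero error for every $t<0$, so $\mathcal{T}$ can be chosen freely'' is not correct: the hypothesis only gives a strong $\delta_{0}$-neck at time $-1$, so the cylindrical flow is $M_{t}=\left(\sqrt{2(n-1)(t_{*}-t)}\,S^{n-1}+p_{*}\right)\times\mathbb{R}$ with $|t_{*}|=O(\delta_{0})$ and $|p_{*}-p|=O(\delta_{0})$, \emph{not} with singular time $0$ and axis aligned with $p$. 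Since the conclusion demands a neck centered at the given $p$ with radius $\sqrt{2(n-1)|t|}$ (normalized to singular time $0$), and since $\delta_{0}$ need not be smaller than $\delta_{1}$ (in the application one takes $\delta_{0}=2\varepsilon_{1}$ and $\delta_{1}=\tfrac12\varepsilon_{1}$), you still need $|t|$ large enough that the errors $|t_{*}|/|t|$ and $|p_{*}-p|/\sqrt{|t|}$ drop below $\delta_{1}$; this is exactly the short computation the paper carries out, and it is where $\mathcal{T}(\delta_{0},\delta_{1})$ comes from even in this ``trivial'' case.

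The bowl case contains a more serious conceptual error: you treat the translation as a vanishing error, claiming it ``rescales to order $|v|/\sqrt{|t|}$''. The rescaled time window $s\in(-1,0]$ corresponds to an actual time interval of length $2(n-1)|t|$, over which the bowl translates a distance $c\cdot 2(n-1)|t|$; dividing by the spatial scale $\sqrt{2(n-1)|t|}$ gives a rescaled displacement of order $c\sqrt{2(n-1)|t|}$, which is large, not small. The translation is not an error term at all: combined with the parabolic radius growth $r(h)\approx\sqrt{2(n-1)c^{-1}h}$ of the bowl, it is precisely the mechanism that makes the cross-sectional radius at the fixed point $p$ evolve like $\sqrt{2(n-1)|t|}\sqrt{1-2(n-1)s}$, i.e.\ like the shrinking cylinder; if the translation really were negligible in the rescaled picture, the flow near $p$ would be essentially a \emph{static} cylinder over the window, which is not $\delta_{1}$-close to the required shrinking model. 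What the argument actually needs --- and what the paper does --- is the quantitative expansion $\varphi(r)=\tfrac{cr^{2}}{2(n-1)}+O(\log\sqrt{c}\,r)$ with inverse $r(h)=\sqrt{2(n-1)c^{-1}h}+o\bigl(1/\sqrt{ch}\bigr)$, the deduction from the strong $\delta_{0}$-neck hypothesis (using that the neck has length $\delta_{0}^{-1}$ at nearly constant radius) that $c^{-1}=O(\delta_{0}^{2})$ and $|z_{p}/c|=O(\delta_{0})$, and then a direct computation that the rescaled radius over the window equals $\sqrt{1-2(n-1)s}$ up to errors that tend to $0$ as $|t|\to\infty$, which fixes $\mathcal{T}(\delta_{0},\delta_{1})$; this explicit computation also renders the appeal to Ecker--Huisken unnecessary.
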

\begin{proof}
If $\mathcal{M}$ is a cylindrical flow, after a rotation, we have
$$
M_{t}=\left(\sqrt{2(n-1)(t_{*}-t)}S^{n-1}+p_{*}\right)\times \mathbb{R},
$$
for some $t_{*}\in \mathbb{R}$ and $p_{*}\in  \mathbb{R}^{n+1} $.\\\\
Because  $\mathcal{M}$ has a strong $\delta_{0}$-neck with center $p$ and radius $\sqrt{2(n-1)}$ at time $-1$, possibly after shifting $p_{*}$ along the $x_{n+1}$-axis, we get that
\begin{equation}\label{t*}
 |t_{*}|=O(\delta_{0}) \quad |p_{*}-p|=O(\delta_{0}).
\end{equation}
Now, given any $t<0$, and $s\in (-1, 0]$, we compute 
$$
\frac{1}{\sqrt{2(n-1)|t|}}\left(M_{(t+{2(n-1)|t|s)}}-p\right)=\left(\sqrt{1-2(n-1)s+|t|^{-1}t_{*}}S^{n-1}+\frac{({p_{*}-p})}{\sqrt{2(n-1)|t|}}\right)\times \mathbb{R}.
$$
Thanks to \eqref{t*}, the terms $|t_{*}t^{-1}|$ and $|t|^{-\frac{1}{2}}|p-p_{*}|$ can be made arbitrarily small for $|t|$ large enough. This shows that $\mathcal{M}$ has a strong $\delta_{1}$-neck with radius $\sqrt{2(n-1)|t|}$ at time $t$, provided $t$ is negative enough (depending only on $\delta_{0}$, $\delta_{1}$).
\\\\
If $\mathcal{M}$ is  translating bowl, up to a rotation and translation, we can assume that $M_{t}=M_{0}+ct\vec{e}_{n+1}$, where $M_{0}$ is the graph of function $\psi(x)=\varphi(|x|)$, and $\varphi$ is strictly convex and attains its minimum at the origin and has the following asymptotic expansion as $r\rightarrow +\infty$ (see  \cite[Lem 2.2]{CSS07}):
\begin{equation}
\varphi(r)=\frac{cr^2}{2(n-1)}+O(\log \sqrt{c}r).    
\end{equation}
The function $\varphi(r)$ is strictly monotone on $[0,+\infty)$ and has an inverse $r(h)$, where
\begin{equation}\label{r(h)}
    r(h)=\sqrt{2(n-1)c^{-1}h}+o\left(\frac{1}{\sqrt{ch}}\right).
\end{equation}
Because $\mathcal{M}$ has a (strong) $\delta_{0}$-neck at time $-1$ with radius $\sqrt{2(n-1)}$ and center $p=(x_{p},z_{p})$, setting $h=c+z_{p}$, we obtain
\begin{equation}
    |x_{p}|=|r(h)-\sqrt{2(n-1)}|=O(\delta_{0}), \quad \quad
    |r(h\pm\delta^{-1}_{0})-\sqrt{2(n-1)}|=O(\delta_{0}).
\end{equation}
This implies
\begin{equation}\label{est}
    \left|\frac{z_{p}}{c}\right|=O(\delta_{0}),\quad \quad \frac{1}{c}=O(\delta^2_{0}).
\end{equation}
Now, for $|z|\leq \sqrt{2(n-1)|t|}\delta^{-1}_{1}$ and $s\in (-1, 0]$, using \eqref{r(h)} and \eqref{est}, we compute,
\begin{equation}
    \frac{r\left(z+z_{p}+c(t+2(n-1)|t|s)\right)}{\sqrt{2(n-1)|t|}}=\sqrt{-1+2(n-1)s+|t|^{-1/2}\delta_{1}^{-1}O(\delta^{-2}_{0})}+|t|^{-1/2}O(\delta^{-2}_{0}\delta^{-1/2}_{1}).
\end{equation}
Therefore, for any  $\delta_{0}$ and $\delta_{1}$ small enough, we can find $\mathcal{T}>0$ large enough, such that if $t\leq-\mathcal{T}$, then $\mathcal{M}$ has a strong $\delta_{1}$-neck with radius $\sqrt{2(n-1)|t|}$ at time $t$. This completes the proof.
\end{proof}
\noindent Next, we state the proposition about small axis tilt.
\begin{proposition}[[{small axis tilt, \cite[Prop 4.1]{GH17}}]\label{st}
Let $\mathcal{M}$ be a mean curvature flow with entropy bound $\Lambda$. For all $\varepsilon_{0}>0$, there exists $\varepsilon_{1}=\varepsilon_{1}(\varepsilon_{0}, \Lambda)>0$, such that if $\mathcal{M}$ has a strong $\varepsilon_{1}$-neck with center $p$ and radius $\sqrt{2(n-1)(t_{*}-t)}$ for all time $t\in [t_{0},t_{1}]$, then $\mathcal{M}$ has a strong $\varepsilon_{0}$-neck with center $p$, radius $\sqrt{2(n-1)(t_{*}-t)}$ and a fixed direction $v$ as axis for all time $t\in [t_{0},t_{1}]$.
\end{proposition}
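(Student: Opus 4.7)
The plan is to follow the Lojasiewicz-based approach of Colding--Minicozzi, exactly as it is implemented in Gianniotis--Haslhofer. After recentering and rescaling, I would pass to the renormalized mean curvature flow $\widehat{M}_\tau = (t_*-t)^{-1/2}(M_t - p)$ with $\tau = -\log(t_*-t)$. The hypothesis of a strong $\varepsilon_1$-neck at each time $t \in [t_0,t_1]$ translates into the statement that $\widehat{M}_\tau$ is $\varepsilon_1$-close in $C^{\lfloor 1/\varepsilon_1\rfloor}$ to some round cylinder $C_{v(\tau)}$ of radius $\sqrt{2(n-1)}$ whose axis $v(\tau) \in S^n$ may a priori vary with $\tau \in [\tau_0,\tau_1]$. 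The entire task is to show that $v(\tau)$ barely moves.

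The heart of the argument is the Colding--Minicozzi Lojasiewicz inequality near the cylinder. Once $\widehat{M}_\tau$ is written as a normal graph of a function $u(\cdot,\tau)$ over a fixed reference cylinder $C_{v_0}$ with $v_0 = v(\tau_0)$, the inequality yields constants $C < \infty$ and $\alpha \in (0, 1/2)$ such that
\begin{equation*}
\bigl|F(\widehat{M}_\tau) - F(C_{v_0})\bigr|^{1-\alpha} \leq C \,\|\mathcal{V}\|_{L^2},
\end{equation*}
where $\mathcal{V}$ is the renormalized MCF velocity and the $L^2$ norm is Gaussian weighted. Since renormalized MCF is the gradient flow of $F$, combining this with the monotonicity identity $-\tfrac{d}{d\tau}(F - F_{cyl}) = \|\mathcal{V}\|_{L^2}^2$ gives a Lojasiewicz ODE, whose integration forces
\begin{equation*}
\int_{\tau_0}^{\tau_1} \|\mathcal{V}\|_{L^2}\,d\tau \;\leq\; C\bigl(F(\widehat{M}_{\tau_0}) - F_{cyl}\bigr)^\alpha \;\leq\; C\varepsilon_1^{2\alpha}.
\end{equation*}
Because $|dv/d\tau|$ is pointwise controlled by the projection of $\mathcal{V}$ onto the rotation modes in the kernel of the Jacobi operator on the cylinder, this yields $|v(\tau) - v(\tau_0)| \leq C\varepsilon_1^{2\alpha}$ uniformly on $[\tau_0,\tau_1]$.

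Choosing $\varepsilon_1$ small enough in terms of $\varepsilon_0$ and $\Lambda$ then makes the axial drift negligible compared to $\varepsilon_0$, so that $v := v(\tau_0)$ serves as a common axis giving a strong $\varepsilon_0$-neck for all $t \in [t_0,t_1]$. The role of the entropy bound $\Lambda$ is to uniformly control the Lojasiewicz constants and to pass from $C^{\lfloor 1/\varepsilon_1\rfloor}$-smallness to Gaussian $L^2$-smallness via Brakke compactness and interpolation.

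The main obstacle is a consistency issue: the Lojasiewicz inequality requires the graph function $u$ over the fixed reference $C_{v_0}$ to remain $C^2$-small throughout $[\tau_0,\tau_1]$, which in turn requires $v(\tau)$ to have stayed close to $v_0$. I would close this loop by a standard continuity argument: the set of $\tau \in [\tau_0,\tau_1]$ on which $|v(\tau)-v_0| < \varepsilon_0/2$ is open, contains $\tau_0$, and by the total-variation estimate above its supremum cannot be strictly less than $\tau_1$ provided $\varepsilon_1$ is chosen small enough depending on $\varepsilon_0$ and $\Lambda$.
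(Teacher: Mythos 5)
Your proposal is correct and follows essentially the same route as the paper's proof (which reproduces \cite[Prop 4.1]{GH17}): the Colding--Minicozzi Lojasiewicz inequality near the cylinder, combined with the monotonicity/gradient-flow structure of the $F$-functional for the renormalized flow, bounds the total Gaussian $L^{2}$ velocity $\int\|\mathcal{V}\|\,d\tau$ by a small quantity depending on $\varepsilon_{1}$, and this in turn controls the total drift of the neck axis. The paper implements this via the discrete Lojasiewicz lemma \cite[Lem 6.9]{CM15} together with Cauchy--Schwarz and Huisken's monotonicity, and converts velocity smallness into axis control using \cite[Lem A.48]{CM15} and interpolation, whereas you use the continuous Lojasiewicz ODE and a kernel-projection plus continuity argument; these are presentational variants of the same argument rather than a genuinely different approach.
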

\noindent This has been proved proof  in \cite[Prop 4.1]{GH17}, using the  Lojasiewicz inequality from Coding-Minicozzi \cite{CM15}. Since this is a crucial ingredient for establishing our diameter bound, we include the proof here as well.
\begin{proof}
Consider the renormalized mean curvature flow $\Sigma_{s}=\frac{1}{\sqrt{t_{*}-t}}M_{t}$, where  $ s=s(t)=-\log(t_{*}-t)$. If $\varepsilon_{1}>0$ is small enough, then we can apply  \cite[Thm 6.1]{CM15}, which gives
\begin{equation}
|F(\Sigma_{s})-F(Z)|^{1+\mu}\leq K(F(\Sigma_{s-1})-F(\Sigma_{s+1})),
\end{equation}
for $s\in [s_{0}, s_{1}]:=[s(t_{0}), s(t_{1}))]$ and some cylinder $Z=\sqrt{2(n-1)}S^{n-1}\times\mathbb{R}$. Here, $\mu>0, K<+\infty$ are constants, and $F$-functional is defined by
\begin{equation}
F(\Sigma)=\int_{\Sigma}\frac{1}{(4\pi)^{\frac{n}{2}} }e^{-\frac{|x|^2}{4}}.
\end{equation}
Applying the discrete Lojasiewicz lemma from \cite[Lem 6.9]{CM15} (see also  \cite[A.1]{GH17}), we infer that for every $\varepsilon>0$, as long as $\varepsilon_{1}>0$  is small enough, we have 
\begin{equation}
\sum_{j=s_{0}+1}^{s_{1}}\left(F(\Sigma_{j})-F(\Sigma_{j+1})\right)^{\frac{1}{2}}<\varepsilon.
\end{equation}
By the Cauchy-Schwarz inequality and Huisken's monontonicty formula, we have
\begin{equation}
\int_{s_{0}}^{s_{1}}\int_{\Sigma_{s}}\left|H+\frac{1}{2}x^{\perp}\right|\frac{e^{-\frac{|x|^2}{4}}}{(4\pi)^{\frac{n}{2}}}\leq \Lambda^{\frac{1}{2}}\sum_{j=s_{0}+1}^{s_{1}}(F(\Sigma_{j})-F(\Sigma_{j+1}))^{\frac{1}{2}}<\Lambda^{\frac{1}{2}}\varepsilon.
\end{equation}
Using also \cite[Lemma A.48]{CM15} and interpolation, this yields the assertion of Proposition \ref{st}. 
\end{proof}

\section{Completion of the proof}\label{completion}
In this final section, we complete the proof of our  Theorem \ref{main} and  Theorem \ref{cb}.
\begin{proof}[Proof of Theorem \ref{main}]
Let $\mathcal{M}=\{M_{t}\}_{t\in [0,T)}$ be a mean curvature flow of closed embedded hypersurfaces in $\mathbb{R}^{n+1}$ with first singular time $T$, satisfying the assumptions of the theorem. Namely, for every singular point at time $T$, there is some tangent flow which is either a one $\mathbb{R}$-factor cylindrical flow or an asymptotically conical flow with multiplicity one.\\\\
The following argument depends on various neck-quality parameters $0<\varepsilon \ll \varepsilon_{1} \ll \varepsilon_{0}$. The logical order for choosing these constants is that  one first fixes $\varepsilon_{0}>0$ small enough depending only on the dimension,  then lets $\varepsilon_{1}=\varepsilon_{1}(\Lambda,\varepsilon_{0})>0$ be the constant from Proposition \ref{st} and finally chooses   $0<\varepsilon \ll \varepsilon_{1}$ given by the claim below. Given any $\bar{t}\in (T-\delta^2, T)$, where $\delta=\delta(\varepsilon)>0$ is from Theorem \ref{decomposition} (decomposition of flow), we want to show that there is a constant $C<\infty$ independent on $\bar{t}$, such that
$$
\text{diam}(M_{\bar{t}})\leq C.
$$
By the Theorem \ref{decomposition} (decomposition of flow) and  Proposition \ref{reduce to neck} (reduction to neck region), it is enough to estimate 
$$
L(M_{\bar{t}}):=\sup \left\{ \text{diam}(N, d_{\bar{t}}): N\subset M_{\bar{t}} \, \text{is an} \,\varepsilon\text{-}\text{tube}\, \text{with regularity scale}\, R<\frac{\rho}{2}\,\text{and}\,  N\subset \Omega_{M} \right\}.
$$
Here, $R$ is regularity scale (see Definition \ref{mr}),   $\rho=\rho(\varepsilon)\in (0, \frac{\delta}{2})$ is from  Theorem \ref{decomposition}, and $N\subset M_{\bar{t}}$ is an $\varepsilon$-tube (see Definition \ref{tube}).  
For each $\varepsilon$-tube $N$, we can find an $\varepsilon$-approximate central curve $\gamma$ parametrized by arclength, such that for each $p\in \gamma$, $\partial_{s}\gamma(p)$ determines the axis of the $\varepsilon$-neck centered at $p$ (see \cite{BHH17}). Then, we only need to estimate the length of $\gamma$.\\\\  Note that for any $p\in\gamma$ and $x\in N$ in the central sphere of the very strong $\varepsilon$-neck with center $p$ and radius $r_{p}$ at time $\bar{t}$, and by the definition of regularity scale in Definition \ref{mr}, for $\varepsilon>0$ small enough, we have that 
\begin{equation}\label{r_p}
    r_{p}\leq \frac{1}{2}\rho.
\end{equation} 
Let
\begin{equation}\label{tau'}
    \tau=\frac{1}{200(n-1)}\rho^{2}.
\end{equation} 
Then, we have the following key claim.\\\\
\textbf{Claim.} For $\varepsilon>0$ small enough, for each $p$ in the $\varepsilon$-approximate curve $\gamma$ of the $\varepsilon$-tube $N$ and every $t\in [\bar{t}-\tau, \bar{t}]$, the flow $\mathcal{M}$ has a strong $\varepsilon_{1}$-neck with center $p$ and radius
\begin{equation}\label{r(t1)}
r(t)=\sqrt{2(n-1)(t_{p}-t)} 
\end{equation}
at time $t$.  Here \begin{equation}
    t_{p}=\bar{t}+\frac{1}{2(n-1)}r^{2}_{p},
\end{equation} 
where $r_{p}$ is the radius of the neck with center $p\in \gamma$ at time $\bar{t}$ as above.
\begin{proof}[{Proof of claim}]
  Suppose towards contradiction, for some fixed $p$, $t_{0}\in [\bar{t}-\tau, \bar{t}]$ is the largest time such that $\mathcal{M}$ does not have a strong $\varepsilon_{1}$-neck with center $p$ and radius $r(t_{0})$ at time $t_{0}$.\\\\
  Because $\varepsilon>0$ is much smaller than $\varepsilon_{1}>0$, and $\mathcal{M}$ has a very strong $\varepsilon$-neck with center $p$ and radius $r(\bar{t})$ at time $\bar{t}$, we have $t_{0}<\bar{t}$. More precisely, let $\mathcal{T}=\mathcal{T}(2\varepsilon_{1}, \frac{1}{2}\varepsilon_{1})$ be the constant from Proposition \ref{t} (backwards stability). Because $p$ is on the central curve of the $\varepsilon$-tube $N\subset M_{\bar{t}}$, the rescaled surface $r_{p}^{-1}(M_{\bar{t}+r^2_{p}s}-p)$ is $\varepsilon$-close to the  surface
 $\sqrt{1-2(n-1)s}S^{n-1}\times \mathbb{R} $ in $B_{\varepsilon^{-1}}(0)$ for $s\in (-2\mathcal{T},0)$. Rescaling  back it, we infer that the flow 
 $\mathcal{M}$ has a strong  $\varepsilon_{1}$-neck with center $p$ and radius $r(t)$ at time $t$, provided that
 \begin{equation}\label{tubein}
     t-r^2(t)\geq \bar{t}-(2\mathcal{T}-1)r^2_{p}.
 \end{equation}
 Hence, the inequality \eqref{tubein} reverses at time $t_{0}$.  This implies that 
 \begin{equation}\label{t0}
     \frac{t_{p}-t_{0}}{t_{p}-\bar{t}}\geq
     \frac{4(n-1)(\mathcal{T}-1)+1}{2(n-1)+1}> \frac{3}{2}\mathcal{T}.
 \end{equation}
  Thus, by the intermediate value theorem, there is some $t_{1}\in (t_{0}, \bar{t})$, such that
  \begin{equation}\label{t1}
      \frac{t_{p}-t_{0}}{t_{p}-{t_{1}}}=\frac{3}{2} \mathcal{T}.
  \end{equation}
  By the definition of $t_{0}$ and since $t_{1}>t_{0}$, we know that the flow $\mathcal{M}$ has a strong $\varepsilon_{1}$-neck with center $p$ and radius $r(t_{1})$ at time $t_{1}$. Let $x_{t_{1}}$ be a point in the central sphere of this neck. We have
  \begin{equation}\label{rh}
      \left|\frac{r(x_{t_{1}})H(x_{t_{1}})}{n-1}-1\right|<\varepsilon_{1},
  \end{equation}
  and
  \begin{equation}\label{xp}
      |x_{t_{1}}-p|<(1+\varepsilon_{1})r(t_{1}).
  \end{equation}
  Also, by \eqref{r_p} and definition of $\tau$ in \eqref{tau'}, we have 
  \begin{equation}\label{rrho}
 r^2(\bar{t}-\tau)=r_{p}^2+2(n-1)\tau\leq \frac{1}{2}\rho^2.
  \end{equation}
Using this and the fact that $p$ is a neck point,  we see that 
  \begin{equation}\label{Arho}
        r(t_{1})<\frac{\sqrt{2}\rho}{2}<\frac{\delta}{2}
        \quad \text{and} \quad
       R(x_{t_{1}},t_{1})<{\rho},
  \end{equation}
provided $\varepsilon_{1}>0$ is small enough.\\\\
Because $p\in \Omega_{M}$, we have $|p-x_{i}|\geq \delta$, where $x_{i}$ denotes the location of conical singularities in Theorem \ref{decomposition}. Combining this with the inequality \eqref{xp}, we see that 
\begin{equation}
    |x_{t_{1}}-x_{i}|>\frac{\delta}{2},
\end{equation}
for all $i=1, \dots, m$, as long as $\varepsilon_{1}>0$ is small enough.
This together with \eqref{Arho} and the decomposition \eqref{MCL} in Theorem \ref{decomposition} implies that 
\begin{equation}
x_{t_{1}}\in \Omega_{M}.    
\end{equation}
Hence, by (i)(b) in Theorem \ref{decomposition}, the flow $\mathcal{M}'$ obtained from $\mathcal{M}$ by translating $(x_{t_{1}},t_{1})$ to the space-time origin and parabolically rescaling by $H(x_{t_{1}})$ is $\varepsilon$-close to a flow $\mathcal{N}$ that is either a cylindrical flow or a translating bowl.\\\\
Since $\mathcal{M}$  has a strong $\varepsilon_{1}$-neck with center $p$ and radius $r(t_{1})$ at time $t_{1}$, taking also into account \eqref{rh} and \eqref{xp}, we see that $\mathcal{N}$ has a strong $2\varepsilon_{1}$-neck with center $0$ and radius $n-1$ at time $0$. Applying Proposition \ref{t} (backwards stability) on $\mathcal{N}$, rescaling $\mathcal{N}$ by $H^{-1}(x_{t_{1}})$ and using again \eqref{rh} and \eqref{xp}, we infer that  $\mathcal{M}$ has a strong $\varepsilon_{1}$-neck with center $p$ and radius $r(t)$ at time $t$ as long as 
 \begin{equation}\label{tt1}
t\leq t_{1}-\frac{r^2(t_{1})}{2(n-1)}(\mathcal{T}-1).
 \end{equation} 
 On the other hand, using \eqref{t1} and the definition of $r(t_{1})$ in \eqref{r(t1)}, we see that
 \begin{equation}
     t_{0}-t_{1}=(1-\frac{3}{2}\mathcal{T})(t_{p}-t_{1})\leq (1-\mathcal{T})(t_{p}-t_{1})=-\frac{r^2(t_{1})}{2(n-1)}(\mathcal{T}-1),
 \end{equation} so $t_{0}$ satisfies \eqref{tt1}. 
 Hence, for $\varepsilon>0$ small enough, $\mathcal{M}$ has a strong $\varepsilon_{1}$-neck at $p$ with radius $r(t_{0})$ at time $t_{0}$. This contradiction completes the proof of the claim.
  \end{proof}
\noindent We continue proving Theorem \ref{main}. By the claim, we can apply Proposition \ref{st} (small axis tilt)  and obtain that for every $p\in \gamma$, there exists a fixed $O_{p}\in SO(n+1)$, such that for all $t\in [\bar{t}-\tau, \bar{t}]$ we have that $M_{t}$ is $\varepsilon_{0}$-close to the cylinder 
\begin{equation}
Z_{p}=p+O_{p}\left(\sqrt{2(n-1)(t_{p}-t)}S^{n-1}\times \mathbb{R}\right) 
\end{equation}
in $B_{\varepsilon_{0}^{-1}\sqrt{2(n-1)(t_{p}-t)}}(p)$.    
\\\\
Furthermore, as long as $|p_{1}-p_{2}|<(4\varepsilon_{0})^{-1}\sqrt{\tau}$, the associated cylinders $Z_{p_{1}}$ and $Z_{p_{2}}$ will align up to an $\varepsilon_{0}$-error rotation. Then, $\|O_{p_{1}}-O_{p_{2}}\|=O(\varepsilon_{0})$ and $|t_{p_{1}}-t_{p_{2}}|=O({\varepsilon_{0}})$. This implies that the  intrinsic distance is controlled by extrinsic distance, namely,
\begin{equation}
d_{\gamma}(p_{1}, p_{2})\leq (1+O(\varepsilon_{0}))|p_{1}-p_{2}|.   
\end{equation}
for any two points $p_{1}, p_{2} \in \gamma$ with $|p_{1}-p_{2}|<(4\varepsilon_{0})^{-1}\sqrt{\tau}$.\\\\
Noticing that $\tau$ only depends on $\rho=\rho(\varepsilon)$, $\delta=\delta(\varepsilon)$, and $\Omega$ is covered by controlled number of balls of radius $(4\varepsilon_{0})^{-1}\sqrt{\tau}$, we conclude that $L(M_{\bar{t}})$ is uniformly bounded in $(T-\delta^2, T)$. This completes the proof.
\end{proof}
\begin{proof}[Proof of Theorem \ref{cb}]
 We want to prove that there is a constant $C<\infty$, such that for $t\in [0, T)$, we have
\begin{equation}
 \int_{M_{t}}|A|d\mu_{t} <C.
\end{equation}
Using the decomposition \eqref{MCL}  in  Theorem \ref{decomposition}, we only need to verify the curvature bound in $\Omega_{L}$, $\Omega_{M}$ and $\Omega_{C}$.  Since the flow has bounded curvature in $\Omega_{L}$, the curvature estimation holds in $\Omega_{L}$. Hence, we only need to show curvature estimation in $\Omega_{M}$ and $\Omega_{C}$. \\\\
For the flow restricted in the mean convex part $\Omega_{M}$, by the description of $\Omega_{M}$ in Theorem \ref{decomposition}, the flow in $\Omega_{M}$ can be decomposed into the union of controlled number of $\varepsilon$-tubes possibly with caps as ends or identified ends. Hence, we only need to estimate curvature bound on each $\varepsilon$-tube with their possibly cap ends. \\\\
Notice that the curvature of the flow in $\Omega_{M}$ is bounded below by some number $\bar{H}=c\rho^{-1}>0$, where $\rho$ is from Theorem \ref{decomposition} and $c>0$. This implies that these caps only contribute $C(n,\varepsilon)\rho$ amount to the curvature integral. On the other hand, for each $\varepsilon$-tube $N\subset M_{t}$, by Vitali's covering lemma, we can write $N$ as the union of a maximal collection of $\varepsilon$-necks $N_{j}$ with center $p_{j}$ and radius $r_{j}$ at time $t$, such that the collection of balls $\{B_{\frac{r_{j}}{5}}\}$ are disjoint. Hence, by Theorem \ref{main}, we have 
\begin{equation}
    \int_{N} |A|^{n-1} d\mu_{t}\leq \sum_{j} \int_{N_{j}} |A|^{n-1} d\mu_{t} \leq C\sum_{j}r_{j}\leq 5C \sup_{{t\in [0, T)}} \text{diam}(M_{t},d_{t})<\infty,
\end{equation}
where $C=C(n, \varepsilon)<\infty$.\\\\
For the flow restricted in the conical part $\Omega_{C}$, we estimate the curvature integral in every conical neighborhood $B_{\delta}(x_{i})\times (T-\delta^2, T)$, where $i\in\{1, \dots, m\}$. Because $M_{t}\cap B_{\delta}(x_{i})$ can be written as $C^{l+1}$ graph over   $\left(\sqrt{T-t}\Sigma_{i}+x_{i}\right)\cap B_{\delta}(x_{i})$ with $C^{l+1}$ norm less than $\varepsilon$, where $\Sigma_{i}$ is an asymptotically conical shrinker, we can find a constant $C=C(n, \varepsilon, \delta)<\infty$, such that
\begin{equation}
\int_{M_{t}\cap B_{\delta}(x_{i})}|A|^{n-1}d\mu_{t}\leq C(n,\varepsilon)\int_{\left(\sqrt{T-t}\Sigma_{i}+x_{i}\right)\cap B_{\delta}(x_{i})} |A|^{n-1} d\mu^{i}_{t}<C.
\end{equation}
This completes the proof of Theorem \ref{cb}.
\end{proof}

\bigskip

\bibliographystyle{alpha}
\bibliography{references}
\bigskip
\textit{Department Of Mathematics, University of Toronto, Toronto, ON, M5S 2E4, Canada\\}
\textit{E-mail Address: wenkui.du@mail.utoronto.ca}

\end{document}